\newtheorem{Theorem}{Theorem}[section]
\newtheorem {Cor}[Theorem]{Corollary}
\newtheorem {pro}[Theorem]{Proposition}
\newtheorem {Lemma}[Theorem]{Lemma}
\newtheorem {rem}[Theorem]{Remark}
\newtheorem {rems}[Theorem]{Remarks}
\newtheorem {com}[Theorem]{Comment}
\newtheorem {coms}[Theorem]{Comments}
\newtheorem {Definition}[Theorem]{Definition}
\newtheorem {exam}[Theorem]{Example}
\newtheorem {exams}[Theorem]{Examples}
\newcommand{\bcom}{\begin{com} \rm } 
\newcommand{\ecom}{\end{com}}
\newcommand{\bcoms}{\begin{coms} \rm } 
\newcommand{\ecoms}{\end{coms}}
\newcommand {\bdef}{\begin{Definition}}
\newcommand {\edefi}{\end{Definition}}
\newcommand {\bl}{\begin{Lemma}}
\newcommand {\el}{\end{Lemma}}
\newcommand {\bethe}{\begin{Theorem}}
\newcommand {\eethe}{\end{Theorem}}
\newcommand {\bp}{\begin{pro}}
\newcommand {\ep}{\end{pro}}
\newcommand {\bcor}{\begin{Cor}}
\newcommand {\ecor}{\end{Cor}}
\newcommand {\brem }{\begin{rem} \rm }
\newcommand {\erem }{\end{rem}}
\newcommand {\brems }{\begin{rems} \rm }
\newcommand {\erems }{\end{rems}}
\newcommand {\bcorr}{\begin{corr} \rm }
\newcommand {\ecorr}{\hfill $\lhd$ \end{corr}}
\newcommand {\bex}{\begin{exam} \rm }
\newcommand {\eex}{\end{exam}}
\newcommand {\bexs}{\begin{exams} \rm }
\newcommand {\eexs}{\end{exams}}
\newcommand {\benu}{\begin{enumerate}}
\newcommand {\eenu}{\end{enumerate}}
\newcommand {\be}{\begin{equation}}
\newcommand {\ee}{\end{equation}}
\newcommand {\bde}{\begin{displaymath}}
\newcommand {\ede}{\end{displaymath}}
\newcommand {\beq}{\begin{eqnarray*}}
\newcommand {\eeq}{\end{eqnarray*}}
\newcommand {\beqa}{\begin{eqnarray}}
\newcommand {\eeqa}{\end{eqnarray}}
\def \proof {{\sc{Proof:}}~}
\def \finproof {\hfill $ \square$  \\ }
\definecolor{mdtRed}{RGB}{255,0,255}
\def \R{\mathbb R}
\def \E{\mathbb E}
\def \P {\mathbb P}
\def \ind{1\!\!1}
\def \F{{\cal F}}
\def \ff{{\mathbb {F}}}
\def \gg{{\mathbb {G}}}
\def\mb{\textcolor{blue} }
\def\mr{\textcolor{red} }
\begin{document}


\date{}

\title{Dependent Default Modeling through Multivariate Generalized Cox Processes
 } 


\author{Djibril Gueye}
\affil{\small{QUANTLABS: the innovative subsidiary of the Rainbow Parterns Group, a consulting firm specializing in Banking and Insurance, in financial services and IT professions. \\  92200, Neuilly-sur-Seine, France}}
\author{Alejandra Quintos\thanks{Corresponding author: alejandra.quintos@wisc.edu} \thanks{Supported in part  by the Office of the Vice Chancellor for Research and Graduate Education at the University of Wisconsin-Madison with funding from the Wisconsin Alumni Research Foundation}}
\affil{\small{Department of Statistics. University of Wisconsin-Madison. \\ Madison, WI, 53706}}


\maketitle

\begin{abstract}
    {We propose a multivariate framework for modeling dependent default times that extends the classical Cox process by incorporating both common and idiosyncratic shocks. Our construction uses càdlàg, increasing processes to model cumulative intensities, relaxing the requirement of absolutely continuous compensators. Analytical tractability is preserved through the multiplicative decomposition of Azéma supermartingales under assumptions that guarantee deterministic compensators. The framework captures a wide range of dependence structures and allows for both simultaneous and non-simultaneous defaults. We derive closed-form expressions for joint survival probabilities and illustrate the flexibility of the model through examples based on Lévy subordinators, compound Poisson processes, and shot-noise processes, encompassing several well-known models from the literature as special cases. Finally, we show how the framework can be extended to incorporate stochastic continuous components, thereby unifying gradual and abrupt sources of default risk.}
\end{abstract}

\section{Introduction}
Modeling dependent default times remains a fundamental challenge in credit risk and insurance, particularly due to the need to accurately capture both simultaneous and non-simultaneous default events. A traditional approach is the Cox model introduced by Lando (see \cite{lando1998cox}), which constructs a random default time $\tau$ on a filtered probability space $(\Omega, \mathcal{A}, \mathbb{P}, \ff)$, where $\ff = (\mathcal{F}_t)_{t \geq 0}$ represents the filtration. In this model, $\tau$ is defined as the first time an increasing process {$K$}, adapted to the filtration $\ff$ and absolutely continuous with respect to the Lebesgue measure, hits a threshold level, which is modeled as a positive random variable independent of $\ff$. A key feature of models like this one, which rely on the progressive enlargement of filtration, is that the random time $\tau$ avoids all $\ff$-stopping times, meaning $\mathbb{P}(\tau = \theta < \infty) = 0$ for any $\ff$-stopping time $\theta$. While this structure provides mathematical tractability, it limits the model’s ability to account for default events triggered by major external shocks, precisely the types of events that often induce simultaneous defaults in practice.

To address these limitations, the generalized Cox process introduced by \citet{gueye2021generalized} extends the classical framework by allowing default events to coincide with jumps of an underlying process. This approach relaxes the avoidance assumption and improves the capacity to model realistic default scenarios. Specifically, \citet{gueye2021generalized} explores cases where the increasing process {$K$}, which determines the default time by hitting a threshold, is not required to be absolutely continuous. Instead, it is only assumed that {$K$} is adapted, increasing, and càdlàg (right-continuous with left limits) or làdcàg (left-continuous with right limits). As a result, the default time $\tau$ no longer avoids $\ff$-stopping times, which correspond to the jump times of {$K$}. This setting permits the construction of multiple random times $\tau^1, \dots, \tau^n$ that can coincide both with stopping times and with one another, thereby enabling the modeling of dependent default events. The dependence among the default times is introduced via the jumps of the common process {$K$}, while the framework preserves conditional independence, maintaining analytical tractability of the resulting multivariate distribution.

A recent contribution that employs a general class of {$K$} processes within this framework is provided by \citet{mai2019subordinators}, whose comprehensive analysis illustrates how jumps in {$K$} can generate simultaneous defaults ($\tau_i = \tau_j$ for $i \neq j$) while encompassing several important special cases. Building on this idea, an informationally dynamic extension was developed in the bivariate setting $(\tau^1, \tau^2)$ by \citet{chaeib2022two}. However, their construction imposes a strong constraint: at any jump time $\theta_0$ of {$K$}, the condition $\mathbb{P}(\tau^1 = \theta_0) = 1$ necessarily implies $\mathbb{P}(\tau^2 = \theta_0) = 1$. {This restriction reduces model flexibility by excluding non-simultaneous defaults at shared jump times.}

While the present paper builds upon the filtration-based approach of \citet{gueye2021generalized}, other extensions to the Cox framework have been proposed in the literature. One such example is the model introduced by \citet{ProtterQuintos2024}, where each default time is modeled as the minimum of idiosyncratic and systemic Cox-type components, allowing for a positive probability of simultaneous default times, that is, $\mathbb{P} \left( \tau_i = \tau_j \right) >0$. This approach preserves absolutely continuous compensators and analytical tractability while introducing singular components in the joint distribution. Unlike the framework of \citet{gueye2021generalized}, where dependence arises from shared jumps of a common process $K$, this model achieves dependence through structural composition rather than filtration enlargement.

In this paper, we extend the generalized Cox framework to a fully multivariate setting by constructing a flexible and tractable model that accounts for both common and idiosyncratic shocks. Rather than relying on a single jump process to drive all default times, we introduce $n$ correlated càdlàg increasing processes {$K^1, \dots, K^n$} on $\mathbb{R}_+$, allowing for a richer class of dependence structures, including both simultaneous and non-simultaneous defaults. Each default time $\tau^i$ is generated by a Cox-type construction based on its corresponding process {$K^i$}, with dependence introduced through shared or partially shared jump components.

Previous work has made initial progress in this direction. For instance, \citet{gueye2024analyzing} and \citet{gueye2024KokuloCDS} proposed models in joint life insurance and counterparty risk, respectively, where {$K^1$} and {$K^2$} are compound Poisson processes driven by a common Poisson process {$N$}. 

In our framework the processes $K^1, \dots, K^n$ are c\`adl\`ag, increasing, and can be specialized to any L\'evy subordinator (including compound Poisson), shot‑noise mechanism, P\'olya‑type model, and beyond. This fully multivariate construction not only recovers the models of \citet{jS2006, mai2009tractable, mai2009levy, mai2011reparameterizing,  peng2008connecting, gaspar1588750credit, sss, hofert2013sibuya, sun2017marshall}, but also opens the door to modeling new classes of processes not previously studied in the literature. 

To derive explicit expressions for joint survival probabilities, we leverage the multiplicative decomposition of the Azéma supermartingales associated with the random times $\tau^{i}$ and assume deterministic compensators. This structure not only ensures analytical tractability but also enables us to quantify the dependence between defaults.

In addition to presenting the theoretical foundations of the model, we explore its connections to several well-known processes, including Lévy subordinators, compound Poisson processes, and shot-noise processes. 

This paper is organized as follows. Section \ref{sect:Known.Results.Definitions} reviews key definitions and results on random times and compensators within the filtration framework. Section \ref{sect:Bivariate.Case} presents the core model construction in the bivariate case, illustrating how dependence is introduced through correlated càdlàg processes and how explicit survival probabilities are derived under deterministic compensators. In Section \ref{sect:Generalization}, we generalize the construction to the multivariate case and establish analytical formulas for joint survival probabilities. Finally, Section \ref{sect:Relaxing.assumption} explores how one of the central assumptions (the determinism of the compensators) can be relaxed, and discusses the impact of this generalization on the tractability and structure of the model. Along the way, we explore connections to well-known processes such as Lévy subordinators, compound Poisson processes, and shot-noise processes, highlighting how these examples fit naturally within our framework.

\section{Some Well-Known Results and Definitions} \label{sect:Known.Results.Definitions}

We consider a probability space $(\Omega, \mathcal{G}, \P)$ endowed with random time $\tau$. The objective of this section is to {review foundational tools from the theory of stochastic processes} to understand default times and their treatment within the filtration framework.

\subsection{Random Times and Default Processes}

Given a random time \(\tau\), the associated default process is defined by
\[
A_t = \ind_{\{\tau \leq t\}},
\]
which is an increasing and càdlàg (right-continuous with left limits) process. This process tracks the occurrence of the default event over time.

\subsection{Projections onto a Reference Filtration}

Let \(\mathbb{H}\) be a subfiltration of \(\mathcal{G}\) and to relate \(A\) to \(\mathbb{H}\), we define:

\begin{enumerate}
    \item The \(\mathbb{H}\)-dual predictable projection \(A^{p,\mathbb{H}}\): the unique \(\mathbb{H}\)-predictable, integrable, increasing process such that for all \(\mathbb{H}\)-predictable processes \(Y\),
    \[
    \mathbb{E}\left[ \int_0^\infty Y_s\, dA_s \right] = \mathbb{E}\left[ \int_0^\infty Y_s\, dA_s^{p,\mathbb{H}} \right].
    \]

    \item The \(\mathbb{H}\)-dual optional projection \(A^{o,\mathbb{H}}\): defined analogously for all \(\mathbb{H}\)-optional processes \(Y\).
\end{enumerate}

\noindent These projections are fundamental in defining compensators under reduced information.

\subsection{Compensators and the Doob-Meyer Decomposition}

The \(\mathbb{H}\)-compensator of \(\tau\) (also called the \(\mathbb{H}\) compensator of the indicator process \(A\)) is the unique, increasing \(\mathbb{H}\)-predictable process \(J^{\mathbb{H}}\) such that \(A_t - J^{\mathbb{H}}_t\) is a \(\mathbb{H}\)-martingale and \(J^{\mathbb{H}}_0 = 0\). If \(\tau\) is an \(\mathbb{H}\)-stopping time, then \(J^{\mathbb{H}}_t = A^{p,\mathbb{H}}_t = J^{\mathbb{H}}_{\tau \wedge t}\) (i.e., the compensator coincides with the \(\mathbb{H}\)-dual predictable projection of \(\tau\) ).

\subsection{Multiplicative Decomposition of Supermartingales}

It is well-known that any strictly positive, bounded, càdlàg \(\mathbb{H}\)-supermartingale \(Y\) admits a unique multiplicative decomposition (MD) of the form:
\begin{equation} \label{Multiplicative.Decomposition.Supermartingale}
    Y = N C,
\end{equation}
where \(N\) is a local \(\mathbb{H}\)-martingale with \(N_0 = 1\), and \(C\) is a decreasing, \(\mathbb{H}\)-predictable process. (See, e.g., \cite[Proposition 1.32, page 15]{aksamit2017enlargement}.) This decomposition underpins the construction of Azéma supermartingales.

\subsection{The Azéma Supermartingale and Enlargement of Filtrations}

Now consider a filtered probability space $(\Omega, \mathcal{G}, \ff, \P)$ where $\ff = (\mathcal{F}_t)_{t \geq 0}$ is the reference filtration. For a random time \(\tau\), define the Azéma supermartingale:
\[
Z_t := \mathbb{P}(\tau > t \mid \mathcal{F}_t),
\]
which satisfies \(Z_t > 0\) on \(\{\tau > t\}\) and \(Z_{t-} > 0\) on \(\{\tau \geq t\}\) (see \cite[Lemma 2.14]{aksamit2017enlargement}).

Let \(\mathbb{G} = (\mathcal{G}_t)_{t\geq 0} \) be the progressive enlargement of \(\mathbb{F}\) with \(\tau\), i.e., \(\mathbb{G} = \mathbb{F} \vee \mathbb{A}\),  where 
$\mathcal{G}_t = \cap_{\epsilon > 0} \mathcal{G}^0_{t+\epsilon}$, 
$\mathcal{G}^0_s = \mathcal{F}_s \lor \mathcal{A}_s$ for all $s \geq 0$, and $\mathbb{A}$ is the natural filtration\footnote{This filtration is completed and continued right.} of the process $A_t $ (see, e.g., \citet{jeulin1978grossissement}). This is the smallest filtration satisfying the usual conditions that contains $\ff$ and makes $\tau$ a $\mathbb{G}$ stopping time.

\subsection{The Reduction of the Compensator}

The \(\mathbb{F}\)-predictable reduction of the compensator of \(\tau\) is the process:
\begin{equation} \label{eq:F.predictable.reduction.compensator}
    \Lambda_t = \int_0^t \ind_{\{Z_{s-} > 0\}} \frac{dA^{p,\mathbb{F}}_s}{Z_{s-}},
\end{equation}
which is \(\mathbb{F}\)-predictable and increasing. Its stopped version \(\Lambda^\tau_t := \Lambda_{t \wedge \tau}\) satisfies:
\[
A_t - \Lambda^\tau_t = A_t - \int_0^{t \wedge \tau} \frac{dA^{p, \ff}_s}{Z_{s-}},
\]
and is a \(\mathbb{G}\)-martingale (see \cite[Proposition 2.15]{aksamit2017enlargement}). The process \(\Lambda^\tau\) is thus the \(\mathbb{G}\)-compensator of \(A\), also called the compensator of \(\tau\).

\brem \textbf{(Intuition behind \(\mathbb{F}\)-predictable reduction vs \(\mathbb{G}\)-compensator)}
    The process \(\Lambda\) can be viewed as a ''proxy" for the compensator of \(\tau\) when working in the smaller filtration \(\mathbb{F}\), where \(\tau\) is not yet observable. Since we cannot directly define a compensator for a non-adapted process, we construct \(\Lambda\) using the \(\mathbb{F}\)-predictable projection of the default process \(A\), scaled by the conditional survival probability \(Z\). This results in an \(\mathbb{F}\)-predictable process that accumulates the risk of default based on the information in \(\mathbb{F}\) alone.
    
    Once \(\tau\) is incorporated into the filtration, i.e., when we pass to the enlarged filtration \(\mathbb{G}\), the process \(A\) becomes adapted, and we can define its true \(\mathbb{G}\)-compensator. It turns out that this \(\mathbb{G}\)-compensator is precisely \(\Lambda^\tau_t := \Lambda_{t \wedge \tau}\), that is, the process \(\Lambda\) stopped at \(\tau\). This connection bridges the filtration before and after default is observable.
\erem

We now recall a characterization that plays a central role in the construction of compensators. By definition, a random time \( \tau \) avoids all \( \mathbb{F} \)-predictable stopping times if \( \mathbb{P}(\tau = \vartheta < \infty) = 0 \) for every \( \mathbb{F} \)-predictable stopping time \( \vartheta \). This property admits a key characterization in terms of the continuity of the dual projections: \( \tau \) avoids all \( \mathbb{F} \)-predictable stopping times\footnote{This condition is implied by total inaccessibility, which we assume later in the paper to simplify the decomposition and derivation of survival probabilities.} if and only if the dual predictable projection \( A^{p,\mathbb{F}} \) is continuous (see, e.g., Proposition 1.43 in \cite{aksamit2017enlargement}). This is equivalent to requiring that the Azéma supermartingale \( Z \) does not jump at any \( \mathbb{F} \)-predictable stopping time, and is commonly used in the literature on progressive enlargement of filtrations (see, for example, \citet{nikeghbali2006essay}).

\brem\label{rmk:avoidance.ST}\textbf{(On the Avoidance of Stopping Times)}
    In the generalized Cox framework adopted in this paper, the compensator \( \Lambda \) of a random time \( \tau \) is given by the \( \mathbb{F} \)-predictable reduction of the dual predictable projection (as defined in equation \eqref{eq:F.predictable.reduction.compensator} ). If \( \Lambda \) is continuous, then \( \tau \) avoids all \( \mathbb{F} \)-predictable stopping times. However, this does not imply that \( \tau \) avoids all \( \mathbb{F} \)-stopping times. In particular, it may coincide with some optional stopping times, since the dual optional projection \( A^{o,\mathbb{F}} = 1 - e^{-K} \) is generally not continuous. This feature distinguishes our generalized Cox model from classical constructions, where absolutely continuous compensators ensure avoidance of all stopping times. In our setting, the ability of \( \tau \) to occur at jump times of the underlying process \( K \) enables the modeling of simultaneous default events, which is essential for capturing systemic risk.
\erem

\section{The Model Construction in the Bivariate Case}\label{sect:Bivariate.Case}

Let us first recall some results from \citet{gueye2021generalized} that will be utilized in the subsequent analysis. Consider the pair $(\Theta, K)$, where $\Theta$ is an exponentially distributed random variable with parameter $1$, independent of the filtration $\ff$, and $K$ is an increasing, c\`adl\`ag, $\ff$-adapted process with $K_0 = 0$. {In this setting, $K$ determines the dynamics of the default time $\tau$, resulting in a generalized Cox model, which} can be defined as (see \citet{gueye2021generalized}):
$$
\tau := \inf \{t \geq 0 : K_t \geq \Theta\}.
$$

The Azéma supermartingale $ Z $ is given by 
$$ Z_t = \P(\tau > t \vert \F_{t}) = \P \left( \Theta > K_t \vert \F_t \right) = e^{-K_t} .$$
As established in \cite[Lemma 3.9]{gueye2021generalized} and {in line with the general decomposition result from} equation\eqref{Multiplicative.Decomposition.Supermartingale}, the Azéma supermartingale $ Z $, {being a strictly positive, bounded, c\`adl\`ag, and $\ff$-supermartingale}, admits a unique multiplicative decomposition. This decomposition separates the dynamics of $Z$ into a local martingale and a finite variation component, offering analytical and probabilistic tractability in default modeling:
\be \label{DM1}
Z_t = \mathcal{E}(Y)_t e^{-\Lambda_t} \prod_{s \leq t} (1 - \Delta A^I_s)e^{\Delta A^I_s} = \mathcal{E}(Y)_t \mathcal{E}(-\Lambda)_t, \quad \forall t \geq 0.
\ee

The decomposition \eqref{DM1} can be described in detail as follows:
\begin{itemize}
\item $ \mathcal{E}(X) $ denotes the Doléans-Dade exponential (see to \cite[Page 8 and 14]{aksamit2017enlargement}) of a c\`adl\`ag semimartingale $ X $. It is given by:
$$
\mathcal{E}(X)_t = \exp\left(X_t - \frac{1}{2} \langle X^{(c)}, X^{(c)} \rangle_t\right) \prod_{0 < s \leq t} (1 + \Delta X_s)e^{-\Delta X_s},
$$
where $ X^{(c)} $ denotes the continuous part of the semimartingale $ X $.

\item The martingale component $ Y $ is expressed as:
$$
Y_t = \int_0^t \frac{1}{Z_{s-}(1 - \Delta A^I_s)} \, dM_s,
$$
where $ M $ is the martingale part of the Doob-Meyer decomposition of $ Z $.

The process \( A^I \),  derived from the jump component of the increasing process $K$, arises from the canonical decomposition of the special \(\mathbb{F}\)-semimartingale
\[
I_t = \sum_{s \leq t} (1 - e^{-\Delta K_s}),
\]
which admits the decomposition \( I = M^I + A^I \), where \( M^I \) is a local martingale and \( A^I \) is a predictable, finite variation process.

\item The increasing process $ \Lambda $ is defined by:
$$
\Lambda_t = K^c_t + A^I_t,
$$
where $ K^c_t $ denotes the continuous component of $ K $. 

\item Since $K$ is an increasing process, it has no martingale component and satisfies $\langle \Lambda, \Lambda \rangle_t = \langle K^{(c)}, K^{(c)} \rangle_t = 0$ for all $t \geq 0$. Hence, the Doléans-Dade exponential of \( -\Lambda = -(K^{(c)} + A^I) \) simplifies to:
$$\mathcal{E}(-\Lambda)_t = e^{-\Lambda_t} \prod_{s \le t} (1 - \Delta A^I_s) e^{\Delta A^I_s}, $$
which justifies the form of $\mathcal{E}(-\Lambda)_t$ appearing in equation~\eqref{DM1}.

\end{itemize}

To simplify notation, we introduce {$ \eta := \mathcal{E}(Y) $}, allowing us to write the decomposition of $ Z $ more succinctly as:
\be
Z_t = \eta_t \mathcal{E}(-\Lambda)_t, \quad \forall t \geq 0,
\ee
where $ \eta $ is a martingale with $\eta_0 = 1$, and $\Lambda $ is the {$\ff$-predictable reduction of the compensator} of $\tau$.  

\brems Note that when $K$ is a predictable process, the Azéma supermartingale $Z_t = e^{-K_t}$ is of finite variation. In this case, the Doob–Meyer decomposition contains no martingale component, so $\eta \equiv 1$ and $\Lambda = K$. {Although this setting yields considerable simplifications}, the present work focuses on more general (not necessarily predictable) processes $K$, where a nontrivial martingale component $\eta$ arises in the decomposition.
\erems

We make the following assumptions that imply $\Lambda$ is deterministic\footnote{
As indicated in \cite[Exercise 2.4]{aksamit2017enlargement}, this yields two non-trivial structural properties:
\begin{enumerate}
\item[a)] $\tau \perp\!\!\!\perp \F_\infty$ (independence)
\item[b)] $\mathbb{A} \hookrightarrow \gg$ (immersion property), where $\mathbb{A}$ is the natural filtration of the process $A_t = \ind_{\{\tau \leq t\}}$. This property does not generally hold in standard Cox models.
\end{enumerate}}
\begin{enumerate}
\item[({H1}):] The continuous part $K^c$ of $K$ is  deterministic.
\item[(H2):] The process $A^I$ is  deterministic.
\end{enumerate}
   
In this study, we focus exclusively on totally inaccessible random times. This choice enables us to express the multiplicative decomposition using the standard exponential form, rather than the Doléans-Dade exponential. {If} $ \tau $ is totally inaccessible, its predictable projection $ A^{p, \ff} $ is continuous (see, e.g., Proposition 1.43 in \citet{aksamit2017enlargement}). See also the discussion and Remark \ref{rmk:avoidance.ST} at the end of Section \ref{sect:Known.Results.Definitions} for how this relates to the avoidance of \( \mathbb{F} \)-predictable stopping times. It then follows from equation \eqref{eq:F.predictable.reduction.compensator} that the compensator $\Lambda$ is also continuous. Thus, we have $ \Delta A^I = \Delta \Lambda = 0 $, which simplifies the decomposition in \eqref{DM1}. Specifically, we obtain:
\begin{equation} \label{Decp_Mul} 
    Z_t = \eta_t e^{-\Lambda_t}, \quad \forall t \geq 0.
\end{equation}

\subsection{The Bivariate Case \label{SecBiv}}
\brem \textbf{(Notation)}
    To avoid ambiguity, we adopt the convention that superscripts referring to sets of indices are enclosed in braces. For example, we write $\tau^{\{1,2\}}$, $K^{\{1,2\}}$, or $\Theta^{\{1,2\}}$ when the superscript involves more than a singleton. When the index consists of a single element, we omit the brackets and simply write, for instance, $\tau^1$ or $K^2$. This convention will be used consistently throughout the remainder of the paper.
\erem

{We now construct the model by defining the elements appearing in the decomposition \eqref{Decp_Mul}.} For $i = 1, 2$, define the default time
$$\tau^{i} := \inf \left\{t \geq 0 \,:\, K^{i}_t\geq \Theta^{i}\right\},$$ 
where each $K^{i}$ is an $\ff$-adapted, c\`adl\`ag, increasing process with $K^{i}_0=0$, and $\Theta^{i}$ is a unit exponential random variable that is independent of both $\ff$ and $\Theta^{j}$ for $j \neq i$. The processes $K^1, K^2$ are allowed to be dependent, thereby inducing dependence among the default times $\tau^1, \tau^2$. In fact, in Proposition \ref{Prop:tau.1.equal.tau.2}, we will show that there is a positive probability of the two default times to be equal under a certain dependency of $K^1$ and $K^2$.

The corresponding Azéma supermartingale is given by $Z^{i}_t := \P(\tau^{i} > t \mid \F_t) = e^{-K^{i}_t}$. It admits the following multiplicative decomposition:
\be \label{DMi} Z^{i}= \eta^{i} e^{-\Lambda^{i}},\ee
where $\eta^{i}$ is a non-negative $\ff$-martingale starting at 1, and $\Lambda^{i}$ is an $\ff$-predictable, continuous and increasing process with $\Lambda^{i}_0 = 0$.

Define $K^{\{1,2\}} := K^{1} + K^{2}$. Then $K^{\{1,2\}}$ is also an $\ff$-adapted, càdlàg, increasing process starting at zero. Therefore, it defines a generalized Cox {model} with Azéma supermartingale
$$Z^{\{1,2\}}_t := \P(\tau^{\{1,2\}} > t \mid \F_t) = e^{-K^{\{1,2\}}_t}.$$
This supermartingale admits the decomposition:
\begin{equation} \label{DM_3} 
    Z^{\{1,2\}}_t =\eta^{\{1,2\}} e^{-\Lambda^{\{1,2\}}},
\end{equation}
where $\eta^{\{1,2\}}$ is an $\ff$-martingale starting at 1, and $\Lambda^{\{1,2\}}$ an $\ff$-predictable, continuous and increasing process starting at zero. 

{We introduce an auxiliary default time $\tau^{\{1,2\}}$ that captures the joint dynamics encoded by the aggregate process $K^{\{1,2\}}$, allowing us to analyze interactions between components $1$ and $2$.}
$$ 
\tau^{\{1,2\}}:=\inf{\left\{t \geq 0: K^{\{1,2\}}_t\geq \Theta^{\{1,2\}}\right\}}, 
$$
where $\Theta^{\{1,2\}}$ is a unit exponential random variable independent of $\ff$, $\Theta^{1}$, and $\Theta^{2}$. Consequently, equality \eqref{DM_3} {gives} the multiplicative decomposition of the Az\'ema supermartingale associated with $\tau^{\{1,2\}}$.

\begin{rem}
    The auxiliary default time $\tau^{\{1,2\}}$ coincides in conditional distribution with the minimum of the individual default times, i.e., $\tau^1 \wedge \tau^2$. Indeed, for any $t \geq 0$,
    $$
    \P \left( \tau^1 \wedge \tau^2 > t \mid \F_t \right)
    = \P \left( \Theta^1 > K^1_t,\, \Theta^2 > K^2_t \mid \F_t \right)
    = e^{-K^1_t - K^2_t} = e^{-K^{\{1,2\}}_t}.
    $$
    This matches the Azéma supermartingale associated with $\tau^{\{1,2\}}$, which confirms that $\tau^{\{1,2\}} \overset{d}{=} \tau^1 \wedge \tau^2$.
\end{rem}

In the following, we are interested in the conditional joint survival probability of the random times $\tau^1$ and $\tau^2$. To this end, we extend the hypotheses (H1) and (H2), originally formulated for the processes $K^c$ and $A^I$, to processes of the {form} $K^{J, c}$ and $A^{I^J}$, for $J \subseteq \{1, 2\}$. 

Recall that $\Lambda^J = K^{J, c} + A^{I^J} $, where $K^{{J}, c}$ {denotes} the continuous component of the $\ff$-adapted càdlàg process $K^J$, and $A^{I^J}$ is the process derived from the canonical decomposition of the special $\ff$-semimartingale $I^J_t$, defined by:
$$
I^J_t = \sum_{s \leq t} \left( 1 - e^{-\Delta K^J_s} \right).
$$

Under assumptions (H1) and (H2), the compensators $\Lambda^J$ are deterministic, a property that is essential for analyzing the joint survival probabilities of $\tau^1$ and $\tau^2$.

 \begin{rems}
This construction was initially proposed by \citet{chaeib2022two} in the context of joint life insurance, where the contract accounts for simultaneous deaths of a couple triggered by a common shock. This corresponds to the case where $K^{1}$ and $K^{2}$ share the same jump component {triggered by a common random variable} $\Theta^{1}=\Theta^{2}=\Theta$, with $\Theta$ being a unit exponential random variable.

However, this approach does not {allow for} non-simultaneous deaths resulting from a common shock. To overcome this limitation, \citet{gueye2024analyzing}, in the context of joint life insurance, and \citet{gueye2024KokuloCDS}, in the case of counterparty risk, introduced {models} where each $K^{i}$ {is modeled} as a compound Poisson processes driven by a shared Poisson process $N$. Notably, {their construction is a special case of the more general framework developed in this paper.}
 \end{rems}

\begin{Theorem}   \label{Thm:Bivariate.Survival}
For all $t_1$, $t_2$, and $t$ such that $0 \leq t \leq \min(t_1,t_2)$, we have:
$$
\P(\tau^1 > t_1, \tau^2 > t_2 \mid \F_t) = 
\begin{cases} 
\exp\left\{ -\Lambda^{2}_{t_2} - \left( \Lambda^{\{1,2\}}_{t_1} - \Lambda^{2}_{t_1} \right) \right\} \eta^{\{1,2\}}_t, & \text{if } t_1 \leq t_2, \\
\exp\left\{ -\Lambda^{1}_{t_1} - \left( \Lambda^{\{1,2\}}_{t_2} - \Lambda^{1}_{t_2} \right) \right\} \eta^{\{1,2\}}_t, & \text{if } t_2 < t_1.
\end{cases}
$$
\end{Theorem}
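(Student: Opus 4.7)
The plan is to start from the independence structure of the model. Since $\Theta^1$ and $\Theta^2$ are unit exponentials independent of each other and of $\F_\infty$, I can first compute the conditional joint survival given $\F_\infty$:
$$
\P(\tau^1 > t_1,\, \tau^2 > t_2 \mid \F_\infty) = \P(\Theta^1 > K^1_{t_1} \mid \F_\infty)\,\P(\Theta^2 > K^2_{t_2} \mid \F_\infty) = e^{-K^1_{t_1} - K^2_{t_2}}.
$$
By the tower property, the quantity of interest reduces to computing $\E\big[e^{-K^1_{t_1} - K^2_{t_2}} \mid \F_t\big]$ for $t \leq \min(t_1,t_2)$. All subsequent work is an application of the multiplicative decompositions \eqref{DMi} and \eqref{DM_3} combined with the hypothesis that the compensators $\Lambda^i$ and $\Lambda^{\{1,2\}}$ are deterministic.

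Consider the case $t_1 \leq t_2$. I would condition first on $\F_{t_1}$, pulling out the $\F_{t_1}$-measurable factor $e^{-K^1_{t_1}}$:
$$
\E\big[e^{-K^1_{t_1} - K^2_{t_2}} \mid \F_{t_1}\big] = e^{-K^1_{t_1}} \, \E\big[\eta^2_{t_2} e^{-\Lambda^2_{t_2}} \mid \F_{t_1}\big] = e^{-K^1_{t_1}}\, e^{-\Lambda^2_{t_2}}\, \eta^2_{t_1},
$$
where determinism of $\Lambda^2$ lets me take $e^{-\Lambda^2_{t_2}}$ out of the conditional expectation and the martingale property of $\eta^2$ turns the remaining conditional expectation into $\eta^2_{t_1}$. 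Now I invert \eqref{DMi} at time $t_1$ to write $\eta^2_{t_1} = e^{-K^2_{t_1} + \Lambda^2_{t_1}}$, so that $e^{-K^1_{t_1}}\eta^2_{t_1} = e^{-K^{\{1,2\}}_{t_1} + \Lambda^2_{t_1}}$. This collapses the aggregated exponent into a single expression involving $K^{\{1,2\}}$ at time $t_1$.

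Conditioning next on $\F_t$ and using determinism of $\Lambda^2$ again, together with the decomposition $Z^{\{1,2\}}_{t_1} = \eta^{\{1,2\}}_{t_1} e^{-\Lambda^{\{1,2\}}_{t_1}}$ from \eqref{DM_3} and the martingale property of $\eta^{\{1,2\}}$, I obtain
$$
\E\big[e^{-K^{\{1,2\}}_{t_1}} e^{-\Lambda^2_{t_2} + \Lambda^2_{t_1}} \mid \F_t\big] = e^{-\Lambda^{\{1,2\}}_{t_1} - \Lambda^2_{t_2} + \Lambda^2_{t_1}} \, \eta^{\{1,2\}}_t,
$$
which matches the first branch of the claimed formula. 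The case $t_2 < t_1$ is handled by the same argument with the roles of the two indices exchanged.

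The main obstacle is not any single computation but rather bookkeeping the two distinct uses of the multiplicative decomposition: one applied to the individual supermartingale $Z^2$ at the intermediate time $t_2$, and one applied to the aggregated supermartingale $Z^{\{1,2\}}$ at time $t_1$. The assumption that \emph{both} $\Lambda^2$ and $\Lambda^{\{1,2\}}$ are deterministic (guaranteed by (H1)--(H2) applied to $K^2$ and $K^{\{1,2\}}$) is what makes the two successive conditional expectations collapse to the martingale values $\eta^2_{t_1}$ and $\eta^{\{1,2\}}_t$; without this, extra stochastic factors would remain and the formula would not close in terms of the compensators alone. I would also note at the start that totally inaccessible $\tau^i$ ensures that $\mathcal{E}(-\Lambda^i)_t$ reduces to $e^{-\Lambda^i_t}$, as recorded in \eqref{Decp_Mul}, so that no jump product terms appear in the inversion $\eta^i = Z^i e^{\Lambda^i}$.
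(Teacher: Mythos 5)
Your proof is correct and follows essentially the same route as the paper: condition down through the intermediate time, apply the martingale property of $\eta^2$, re-express the product via the inverted multiplicative decompositions of $Z^2$ and $Z^{\{1,2\}}$, and finish with the martingale property of $\eta^{\{1,2\}}$. The only cosmetic difference is that you keep $e^{-K^1_{t_1}}$ as is and never extract $e^{-\Lambda^1_{t_1}}$, whereas the paper pulls out both deterministic exponentials upfront and later cancels $e^{-\Lambda^1_{t_1}}$; the intermediate expressions differ but the final cancellations and the formula are identical.
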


\begin{proof}
For $t \leq \min(t_1, t_2)$, we {start with}:
$$
\P(\tau^1 > t_1, \tau^2 > t_2 \mid \F_t) = \E\left[ e^{-K^{1}_{t_1} - K^{2}_{t_2}} \mid \F_t \right] = e^{-(\Lambda_{t_1}^{1} + \Lambda_{t_2}^{2})} \E\left[ \eta^{1}_{t_1} \eta^{2}_{t_2} \mid \F_t \right],
$$
where we  used the MD of $Z^{i}$ as given in  \eqref{DMi} and the fact that $\Lambda^{i}$ {is} deterministic.

First, consider the case where $t_1 \leq t_2$. {By the tower property of conditional expectation and the martingale property of $\eta^2$:}
$$
\E\left[ \eta^{1}_{t_1} \eta^{2}_{t_2} \mid \F_t \right] 
= \E\left[ \eta^{1}_{t_1} \E\left[ \eta^{2}_{t_2} \mid \F_{t_1} \right] \mid \F_t \right] 
= \E\left[ \eta^{1}_{t_1} \eta^{2}_{t_1} \mid \F_t \right].
$$
From the decomposition $e^{-K^i_{t}} = \eta^i_{t} e^{-\Lambda^i_{t}}$, it follows that:
$$
\E\left[ \eta^{1}_{t_1} \eta^{2}_{t_1} \mid \F_t \right] = \exp\left( \Lambda^{1}_{t_1} + \Lambda^{2}_{t_1} \right) \E\left[ e^{-(K^{1}_{t_1} + K^{2}_{t_1})} \mid \F_t \right].
$$

Now using the definition $K^{\{1,2\}} = K^1 + K^2$, applying the multiplicative decomposition for $Z^{\{1,2\}}$, and the fact that  $\eta^{\{1,2\}}$ is a martingale:
$$
\E[\eta^{1}_{t_1}  \eta^{2}_{t_2}\vert \F_t] = \exp{\bigg(\Lambda^{1}_{t_1}+\Lambda^{2}_{t_1}\bigg)  \E[e^{-K^{\{1,2\}}_{t_1}}\vert  \F_t]} = \exp{\bigg(\Lambda^{1}_{t_1}+\Lambda^{2}_{t_1}-\Lambda^{\{1,2\}}_{t_1}\bigg) \eta^{\{1,2\}}_{t}}.
$$

For the case where $t_2 < t_1$, we proceed similarly and obtain:
$$
\E\left[ e^{-(K^{1}_{t_1} + K^{2}_{t_2})} \mid \F_t \right] = \exp\left( -\Lambda^{1}_{t_1} - (\Lambda^{\{1,2\}}_{t_2} - \Lambda^{1}_{t_2}) \right) \eta^{\{1,2\}}_t.
$$

As a result, we have:
$$
\P(\tau^1 > t_1, \tau^2 > t_2 \mid \F_t) = 
\begin{cases} 
\exp\left( -\Lambda^{2}_{t_2} - (\Lambda^{\{1,2\}}_{t_1} - \Lambda^{2}_{t_1}) \right) \eta^{\{1,2\}}_t, & \text{if } t_1 \leq t_2, \\
\exp\left( -\Lambda^{1}_{t_1} - (\Lambda^{\{1,2\}}_{t_2} - \Lambda^{1}_{t_2}) \right) \eta^{\{1,2\}}_t, & \text{if } t_2 < t_1.
\end{cases}
$$
\finproof
\end{proof}

Consequently, the unconditional joint survival probability is expressed as follows:
\be \label{Surj}
\P(\tau^1 > t_1, \tau^2 > t_2) = 
\begin{cases} 
\exp\left( -\Lambda^{2}_{t_2} - (\Lambda^{\{1,2\}}_{t_1} - \Lambda^{2}_{t_1}) \right), & \text{if } t_1 \leq t_2, \\
\exp\left( -\Lambda^{1}_{t_1} - (\Lambda^{\{1,2\}}_{t_2} - \Lambda^{1}_{t_2}) \right), & \text{if } t_2 < t_1.
\end{cases}
\ee

Following the notation in \citet{sun2017marshall}, define the following quantities for $i=1,2$: 
 $$\Gamma^{1}=\Lambda^{\{1,2\}}-\Lambda^{2},\quad \Gamma^{2}=\Lambda^{\{1,2\}}-\Lambda^{1},\quad \text{and}\quad \Gamma^{\{1,2\}}= \Lambda^{1} +\Lambda^{2}-\Lambda^{\{1,2\}}.$$ 
 These definitions decompose the compensator $\Lambda^{\{1,2\}}$ into additive terms $\Gamma^1, \Gamma^2, \Gamma^{\{1,2\}}$, which isolate the contributions of each marginal component and their interaction (see Remark \ref{Rmk:Mobius.Inversion.Argument}). This decomposition provides a convenient and interpretable expression for the conditional joint survival probability.Specifically, for all $t_1$, $t_2$, and $t$ such that $0 \leq t < \min(t_1,t_2)$, we have:
\be
\P(\tau^1 > t_1, \tau^2 > t_2 \mid \F_t) = 
\exp\left\{ -\Gamma^{1}_{t_1} - \Gamma^{2}_{t_2} - \Gamma^{\{1,2\}}_{t_{1 \lor t_2}} \right\} \eta^{\{1,2\}}_t.
\ee

\begin{rem} \label{rm.MD.Deterministic}
    For each $i \in \{1, 2\}$, the multiplicative decomposition 
    $$
    Z^i_t = e^{-K_t^i} = \eta^i_t e^{-\Lambda^i_t},
    $$
    as the compensator $\Lambda^i$ is deterministic and $\eta^i$ is a martingale, implies that
    $$
    \E\left[ e^{-K^i_t} \right] = \E\left[ \eta^i_t e^{-\Lambda^i_t} \right] 
    = e^{-\Lambda^i_t},
    $$
    This relation simplifies the evaluation of expectations involving $K^i$.
\end{rem}

We now investigate the probability that two default times coincide in our framework. In classical Cox models with absolutely continuous intensities, simultaneous defaults occur with zero probability. However, in our setting, jumps of the cumulative processes \( K^1 \) and \( K^2 \) can lead to atoms in the distributions of \( \tau^1 \) and \( \tau^2 \). The next result provides a closed-form expression for \( \mathbb{P}(\tau^1 = \tau^2) \) under the assumption that the processes jump simultaneously at a discrete sequence of \( \mathbb{F} \)-predictable stopping times.

\bp \label{Prop:tau.1.equal.tau.2} \textbf{(Positive Probability of Simultaneous Default)}
    Assume that $K^1$ and $K^2$ jump only at a countable sequence $\left( \theta_i \right)_{i\geq 1}$ of $\mathbb{F}$-predictable stopping times, and that they jump simultaneously at each $\theta_i$. Then the probability that $\tau^1 = \tau^2$ satisfies:
    \[
    \mathbb{P}\left( \tau^1 =  \tau^2 \right) = \sum_{i\geq 1}  \E\left[\left( e^{-K_{\theta_i-}^{1}}-e^{-K_{\theta_i}^{1}} \right)  \left( e^{-K_{\theta_i-}^{2}}-e^{-K_{\theta_i}^{2}} \right)\right].
    \]
    In particular, $\tau^1$ and $\tau^2$ coincide with positive probability.
\ep

\proof
Let \( (\theta_i)_{i \geq 1} \) be the (at most countable) collection of \( \mathbb{F} \)-predictable stopping times at which \( K^1 \) and \( K^2 \) jump simultaneously. By assumption, these are the only jump times of \( K^1 \) and \( K^2 \), and both processes are continuous elsewhere. Therefore, the event \( \{ \tau^1 = \tau^2 \} \) must occur at one of these stopping times, and we have:
\[
\mathbb{P}(\tau^1 = \tau^2) = \sum_{i \geq 1} \mathbb{P}(\tau^1 = \tau^2 = \theta_i).
\]
By the construction
\[
\tau^j = \inf \left\{ t \geq 0 : K^j_t \geq \Theta^j \right\},
\]
we have:
\[
\mathbb{P}(\tau^j = \theta_i \mid \F_\infty) = \mathbb{P}(K^j_{\theta_i-} < \Theta^j \leq K^j_{\theta_i} \mid \F_\infty) = e^{-K^j_{\theta_i-}} - e^{-K^j_{\theta_i}}.
\]
Since $\Theta^1$ and $\Theta^2$ are independent unit exponentials and independent of $\mathbb{F}$, we obtain:
\[
\mathbb{P}(\tau^1 = \tau^2 = \theta_i \mid \F_\infty) = \left(e^{-K^1_{\theta_i-}} - e^{-K^1_{\theta_i}}\right)\left(e^{-K^2_{\theta_i-}} - e^{-K^2_{\theta_i}}\right).
\]
Taking expectations completes the proof:
\[
\mathbb{P}(\tau^1 = \tau^2) = \sum_{i \geq 1} \mathbb{E} \left[ \left(e^{-K^1_{\theta_i-}} - e^{-K^1_{\theta_i}}\right)\left(e^{-K^2_{\theta_i-}} - e^{-K^2_{\theta_i}}\right) \right].
\] 
\finproof

\subsection{Examples}  \label{ExplBiv}
{In this section, we illustrate the bivariate model through concrete examples. We compute the compensators explicitly for each case, which enables closed-form expressions for the conditional survival probabilities derived in Theorem \ref{Thm:Bivariate.Survival}. These examples highlight the flexibility of the construction in capturing a range of dependence structures through different specifications of the processes $K^1$ and $K^2$}

\subsubsection{The Case of L\'evy Subordinators} \label{Subsect:LevySubordinators}
We consider two Lévy subordinators $L^1$ and $L^2$, $\ff$-adapted, and without drift, {which jump simultaneously at a random sequence of times} $(\theta_i)_{i \geq 1}${. These jump times are assumed to be} $\ff$-stopping times. We define $K^{1} = z_1 L^1$ and $K^{2} = z_2 L^2$, {where $z_1, z_2 \in \mathbb{R}_+$ are positive constants.}

We recall the following classical results on L\'evy subordinators (see, e.g., \citet{ cont2003financial, ken1999levy} ).
\bp\label{Prop:LevySubordinator}
Let $L^1 = \{L^1_t, t \geq 0\}$ and $L^2 = \{L^2_t, t \geq 0\}$ be two Lévy subordinators without drift, which jump at the same times. The Laplace transforms of $L^1$ {and $L^2$ are given by:}
$$ \E\left[ e^{-z L^1_t} \right] = e^{-t \psi_{L^1}(z)}, $$
$$
\E\left[ e^{-z L^2_t} \right] = e^{-t \psi_{L^2}(z)},
$$
where the Laplace exponent is, for $i=1,2$, 
\begin{equation}\label{LE}
    \psi_{L^{i}}(z) = \int_{(0,\infty)} \left( 1 - e^{-z x} \right) \nu^{i}(dx)
\end{equation}
with $\nu^{i}$ being the Lévy measure associated with $L^{i}$.\\

The joint Laplace transform of {$(L^1, L^2)$} is: 
$$ \E\left[ e^{-z_1 L^1_t - z_2 L^2_t} \right] = e^{-t \psi_{L^1, L^2}(z_1, z_2)}, $$
where the joint Laplace exponent is:
\begin{equation} \label{eq:Joint.LE}
    \psi_{L^1, L^2}(z_1, z_2) = \int_{(0,\infty)^2} \left( 1 - e^{-z_1 x_1 - z_2 x_2} \right) \nu^{\{1,2\}}(dx_1, dx_2),
\end{equation}
{and} $\nu^{\{1,2\}}$ is the joint Lévy measure associated to $L^1$ and $L^2$, defined on $(0, \infty)^2$, and satisfying 
$$ \int_{(0,\infty)^2} (1 \wedge (x_1 + x_2)) \nu^{\{1,2\}}(dx_1, dx_2) < \infty. $$
\ep

According to the above proposition and the definitions of {$K^{1}$, $K^{2}$,} and $K^{\{1,2\}} := K^1 + K^2$, we have:
\begin{align*}
\E\left[ e^{-K^{\{1,2\}}_t} \right] &= \E\left[ e^{-z_1 L^1_t - z_2 L^2_t} \right] = e^{-t \psi_{L^1, L^2}(z_1, z_2)}, \\
  \E\left[ e^{-K^{1}_t} \right]  &= \E\left[ e^{-z_1 L^1_t} \right]  = e^{-t \psi_{L^1}(z_1)}, \\
   \E\left[ e^{-K^{2}_t} \right] &= \E\left[ e^{-z_2 L^2_t} \right]  = e^{-t \psi_{L^2}(z_2)}.
\end{align*}

It follows, by using Remark~\ref{rm.MD.Deterministic}, that the deterministic predictable compensators are:
$$ \Lambda^{\{1,2\}}_t = t \psi_{L^1, L^2}(z_1, z_2), \quad \Lambda^{1}_t = t \psi_{L^1}(z_1), \quad \text{and} \quad \Lambda^{2}_t = t \psi_{L^2}(z_2). $$
{
Hence, applying Theorem~\ref{Thm:Bivariate.Survival}, for all $t_1, t_2$, and $t$ such that 
$0 \leq t < \min(t_1, t_2)$, we obtain:}
$$
\P(\tau_1 > t_1, \tau_2 > t_2 \mid \F_t) = 
\begin{cases} 
    e^{-\psi_{L^2}(z_2) t_2 - t_1 \left( \psi_{L^1, L^2}(z_1, z_2) - \psi_{L^2}(z_2) \right)} \eta^{\{1,2\}}_t, & \text{if } t_1 \leq t_2, \\
    e^{-\psi_{L^1}(z_1) t_1 - t_2 \left( \psi_{L^1, L^2}(z_1, z_2) - \psi_{L^1}(z_1) \right)} \eta^{\{1,2\}}_t, & \text{if } t_2 < t_1.
\end{cases}
$$

{These results} rely on the Lévy-Khintchine representation and {the Laplace exponents 
of the underlying Lévy processes}. The special case where $z_1=z_2=1$ corresponds to the construction of \citet{sun2017marshall}.

\subsubsection{The Special Case of Compound Poisson Processes}

{We now} consider the special case where $ K^{1} = L^1 $ and $ K^{2} = L^2 $, with  
$$
L^1 = \sum_{i=1}^{N_t} \gamma_i, \quad\quad\quad L^2 = \sum_{i=1}^{N_t} \alpha_i,
$$
where:
\begin{itemize}
    \item $ N_t $ is a Poisson process with intensity $ \lambda $,
    \item $ (\gamma_i)_{i \geq 1} $ is a sequence of independent and identically distributed (i.i.d.) random variables with common distribution $ F $,
    \item $ (\alpha_i)_{i \geq 1} $ is a sequence of i.i.d. random variables with common distribution $ G $,
    \item The sequences $ (\gamma_i)_{i \geq 1} $ and $ (\alpha_i)_{i \geq 1} $ are mutually independent and independent of $ N_t $.
\end{itemize}

It follows that the process $ L^1 $ is a compound Poisson process with jump distribution $ F $, hence a subordinator with L\'evy measure $\nu^{1}(dx)= \lambda F(dx)$. Therefore, using equation \eqref{LE}, its Laplace exponent is given by:
\begin{equation*}
    \psi_{L^1}(u) = \lambda  \left( 1 - \mathbb{E}[e^{-u \gamma}] \right),
\end{equation*}
where $ \mathbb{E}[e^{-u \gamma}] $ denotes the Laplace transform of the distribution $ F $.
Similarly, $ L^2 $ is a compound Poisson process with jump distribution $ G $ {and Lévy measure} $\nu^{2}(dx)= \lambda G(dx)$. Hence, its Laplace exponent is given by equation \eqref{LE}:
\begin{equation*}
    \psi_{L^2}(u) = \lambda  \left( 1 - \mathbb{E}[e^{-u \alpha}] \right),
\end{equation*}
where $ \mathbb{E}[e^{-u \alpha}] $ is the Laplace transform of the distribution $ G $.
The aggregate process $ K^{\{1,2\}} $ can be expressed as:
$$
K^{\{1,2\}} = K^{1} + K^{2} = L^1 + L^2 = \sum_{i=1}^{N_t} (\gamma_i + \alpha_i).
$$
The jump sizes $ \gamma_i + \alpha_i $ are i.i.d. random variables following the convolution of $ F $ and $ G $, denoted $ F \star G $. As a result, $ K^{\{1,2\}} $ is also a compound Poisson process with:
\begin{itemize}
\item jump distribution $ \gamma + \alpha \sim F \star G $,
\item intensity $ N_t \sim \text{Poisson}(\lambda t) $.
\end{itemize}
The Laplace exponent of $ K^{\{1,2\}} $ is given by equation \eqref{eq:Joint.LE}
\begin{equation*}
    \psi_{K^{\{1,2\}}}(u) = \lambda  \left( 1 - \mathbb{E}[e^{u (\gamma + \alpha)}]  \right).
\end{equation*}
Using the independence of $ \gamma $ and $ \alpha $, we {get}
$$
\mathbb{E}\left[e^{-u (\gamma + \alpha)}\right] = \mathbb{E}\left[e^{-u \gamma}\right] \cdot \mathbb{E}\left[e^{-u \alpha}\right],
$$
which simplifies the Laplace exponent to
\begin{equation*}
    \psi_{K^{\{1,2\}}}(u) = \lambda  \left( 1 - \mathbb{E}\left[e^{-u \gamma}\right] \cdot \mathbb{E}\left[e^{-u \alpha}\right] \right).
\end{equation*}

It follows from proposition \ref{Prop:LevySubordinator}, and using Remark~\ref{rm.MD.Deterministic}, that the deterministic predictable compensators are:
$$
\Lambda^{1}_t =  t \psi_{K^{1}}(1), \quad \Lambda^{2}_t = t \psi_{K^{2}}(1), \quad \text{and} \quad \Lambda^{\{1,2\}}_t = t \psi_{K^{\{1,2\}}}(1).
$$

\subsubsection{The Construction of Liu (2020)}

In \citet{liu2020competing}, the process $K^{i}$, for $i=1,2 $, is {given by the time-changed representation:} 
$$
K^{i}_t := \phi_i(X) \, L^{i}_{\vartheta \delta_0(t)},
$$
{where $X$ denotes observed covariates and the nonnegative functions $\phi_i$ encode the effects of these covariates.  $\vartheta$ represents an unobserved heterogeneity factor and $\delta_0(t)$ is a monotone time-deformation function; hence $\vartheta \delta_0(t)$ captures the effect of heterogeneity and time deformation.}

\noindent{By setting $\vartheta=1$, we recover a case compatible with the framework in Subsection~\ref{Subsect:LevySubordinators}. Conditioning on $X=x$, we obtain compensators of the form:}
$$ \Lambda^{\{1,2\}}_t = \delta_0(t) \psi_{L^{1}, L^{2}}(\phi_1(x), \phi_2(x)),\, \Lambda^{1}_t = \delta_0(t) \psi_{L^{1}}(\phi_1(x)), \,\text{and} \, \Lambda^{2}_t = \delta_0(t)  \psi_{L^{2}}(\phi_2(x)). $$
This means that we can highlight the connection between the joint survival probability in equation \eqref{Surj} and that of Theorem 2.1 in \citet{liu2020competing}. Indeed, since $\vartheta$ is degenerated and equal to $1$ its Laplace transform is then $\psi^{\vartheta}(u):=e^{-u}$. Hence conditional survival probabilities can be written as Eq. (2.15) and (2.16) of Theorem 2.1 in \citet{liu2020competing}.

\subsubsection{The Case of Shot Noise Processes} \label{sn}
Let $(\theta_i)_{i \geq 1}$ be a strictly increasing sequence of $\mathbb{F}$-stopping times with $\theta_1 > 0$, and let $(\gamma_i)_{i \geq 1}$ be a sequence of random variables such that, for each $i \geq 1$, $\gamma_i$ is $\mathcal{F}_{\theta_i}$-measurable.

The pair $(\theta_i, \gamma_i)_{i \geq 1}$ defines a random jump measure given by:
$$
\mu(\omega, [0,t], C) := \sum_{i \geq 1} \mathbf{1}_{\{\theta_i(\omega) \leq t\}} \mathbf{1}_{\{\gamma_i(\omega) \in C\}}, \quad \text{for all } C \in \mathcal{B}(\mathbb{R}).
$$
This jump measure captures the random arrival times and marks of the underlying point process, and serves as the foundation for constructing shot noise process.

Using this jump measure, we define a shot noise processes $K^j$, for $j = 1, 2$, as follows:
\begin{equation} \label{eq:shot.noise.process}
    K^j_t = \sum_{i \geq 1} \mathbf{1}_{\{\theta_i \leq t\}} G^j(t - \theta_i, \gamma_i) = \int_0^t \int_{\mathbb{R}} G^j(t - s, x)\, \mu(ds, dx),
\end{equation}
where each kernel function $G^j : \mathbb{R}_+ \times \mathbb{R} \to \mathbb{R}_+$ is assumed to be measurable.

The kernel $G^j(t - s, x)$ can be interpreted as a weighting function that modulates the impact of past shocks $x$ over time. In this way, the process $K^j$ accumulates the history of past events, with memory or decay effects that are typical in shot noise models.

To ensure that the process $K^j$ is well-defined and integrable, we assume that each $G^j$ {admits the following decomposition}:
$$
G^j(t, x) = G^j(0, x) + \int_0^t g^j(s, x)\, ds,
$$
for some measurable function $g^j : \mathbb{R}_+ \times \mathbb{R} \to \mathbb{R}_+$. Moreover, for every $T > 0$, we assume the following integrability conditions:
\begin{align}
\int_0^T \int_{\mathbb{R}} \left( g^j(s, x) \right)^2 \nu(ds, dx) &< \infty, \label{gsi} \\
\exists\, \varphi^j : \mathbb{R} \to \mathbb{R}_+ \text{ such that } \left|g^j(s, x)\right| &\leq \varphi^j(x), \nonumber \\
\text{and } \int_0^T \int_{\mathbb{R}} \varphi^j(x)\, \nu(ds, dx) &< \infty. \label{lebemaj}
\end{align}

 where $\nu$ denotes the predictable compensator of the random jump measure $\mu$. If we assume that $\nu$ is deterministic, then we can apply Proposition 3.18 from \citet{gueye2021generalized}, or Proposition 2.1 from \citet{tssn}, to derive an explicit expression for the conditional survival probability at time $t$:
$$
\mathbb{P}(\tau^j > u \mid \mathcal{F}_t) = c^j(u)\, L^j_t(u), \quad \text{for all } u \geq t,
$$
where $L^j(u)$ is the Doléans-Dade exponential martingale {defined by}
$$
L^j_t(u) = \mathcal{E}\left( \int_0^{t} \int_{\mathbb{R}} \left( e^{-G^j(u - s, x)} - 1 \right)\, \tilde{\mu}(ds, dx) \right),
$$
and $\tilde{\mu} := \mu - \nu $ is the compensated measure.
 
The normalization factor $c^j(u)$ is given by
$$
c^j(u) = \exp\left( \int_0^u \int_{\mathbb{R}} \left( e^{-G^j(u - s, x)} - 1 \right)\, \nu(ds, dx) \right).
$$

The quantity  $c^j(u)$ also corresponds to the marginal survival probability up to time $u$:
$$
\mathbb{P}(\tau^j > u) = \mathbb{E}\left[ \mathbb{P}(\tau^j > u \mid \mathcal{F}_t) \right] = c^j(u),
$$
since {the process $L^j_t(u)$ is a martingale with expectation equal to $1$}.

Note the structural similarity between this representation of the conditional survival probability and the multiplicative decomposition of the Azéma supermartingale: in both cases, the survival function is written as a product of a deterministic exponential term and a stochastic exponential martingale component (see equation \eqref{DMi}).

Next, we extend this construction to the joint case {by introducing a combined exposure process:}
$$
K^{\{1,2\}}_t := K^1_t + K^2_t = \int_0^t \int_{\mathbb{R}} \left[ G^1(t - s, x) + G^2(t - s, x) \right] \mu(ds, dx). 
$$
This process remains a shot noise process, accumulating the impacts of shocks over time, where each impact is modulated by its intensity and the elapsed time.

As described in Section \ref{SecBiv}, we may introduce {a unit exponential 
random variable $ \Theta^{\{1,2\}} $}, independent of $ \ff $, and define
$$
\tau^{\{1,2\}} := \inf\left\{ t \geq 0 : K^{\{1,2\}}_t > \Theta^{\{1,2\}} \right\}.
$$

The associated Azéma supermartingale is given by:
$$
Z^{\{1,2\}}_t := \mathbb{P}(\tau^{\{1,2\}} > t \mid \mathcal{F}_t) = c^{\{1,2\}}(t) \cdot L^{\{1,2\}}_t,
$$
where
$$
c^{\{1,2\}}(t) := \exp\left( \int_0^t \int_{\mathbb{R}} \left( e^{-G^1(t - s, x) - G^2(t - s, x)} - 1 \right) \nu(ds, dx) \right),
$$
and
$$
L^{\{1,2\}}_t := \mathcal{E} \left( \int_0^t \int_{\mathbb{R}} \left( e^{-G^1(t - s, x) - G^2(t - s, x)} - 1 \right) \tilde{\mu}(ds, dx) \right).
$$

Provided that integrability conditions are satisfied, we have $\mathbb{E}\left[L^{\{1,2\}}_t\right] = 1$, {and thus}
$$
\mathbb{P}(\tau^{\{1,2\}} > t) = \mathbb{E}\left[Z^{\{1,2\}}_t\right] = c^{\{1,2\}}(t).
$$

The predictable reductions of the compensators are then (see equation \eqref{Decp_Mul}) and Remark \ref{rm.MD.Deterministic}:
$$
\Lambda^j_t = \int_0^t \int_{\mathbb{R}} \left( 1 - e^{-G^j(t-s, x)} \right) \nu(ds, dx),
\quad
\Lambda^{\{1,2\}}_t = \int_0^t \int_{\mathbb{R}} \left( 1 - e^{-(G^1 + G^2)(t-s, x)} \right) \nu(ds, dx).
$$

{Therefore, applying Theorem \ref{Thm:Bivariate.Survival}, we obtain the following {expressions for the joint survival probabilities.} 

For $t_1 \leq t_2$:}
\begin{align*}
\P(\tau^1>t_1, \tau^2>t_2) = & \exp\left[ - \Lambda_{t_2}^2 - \left( \Lambda_{t_1}^{\{1,2\}} - \Lambda_{t_1}^2 \right) \right] \\
= & \exp\left\{ \int_{0}^{t_2} \int_{\mathbb{R}} \left( e^{-G^2(t_2-s, x)} -1 \right) \nu(ds, dx) \right. \\
& \left. +  \int_0^{t_1} \int_{\mathbb{R}} \left(  e^{-(G^1 + G^2)(t_1-s, x)} -e^{-G^2(t_1-s, x)}  \right) \nu(ds, dx)\right\}.
\end{align*}
 {And for $t_1 > t_2$:
\begin{align*}
\P(\tau^1>t_1, \tau^2>t_2) = & \exp\left[ - \Lambda_{t_1}^1 - \left( \Lambda_{t_2}^{\{1,2\}} - \Lambda_{t_2}^1 \right) \right] \\
= &  \exp\left\{ \int_{0}^{t_1} \int_{\mathbb{R}} \left( e^{-G^1(t_1-s, x)} - 1 \right) \nu(ds, dx) \right. \\ 
& \left. +  \int_0^{t_2} \int_{\mathbb{R}} \left(  e^{-(G^1 + G^2)(t_2-s, x)} - e^{-G^1(t_2-s, x)} \right) \nu(ds, dx) \right\}  .
\end{align*}}

\brem  \label{Rmk:Scherer.Schmid.Schmid}
The shot-noise framework introduced above satisfies the eight criteria required to model dependence between default events, as formulated in \citet{sss}. In particular, it encompasses {their model} as a special case under the following setting.

For each entity $j = 1, \dots, d$, {the accumulated shock process} {is given by}:
$$
K_t^j = \sum_{i=1}^{N({h_j(t)})} \gamma_i\, G\left( h_j(t) - \theta_i \right),
$$
where:
\begin{itemize}
    \item $G : \mathbb{R}_+ \to \mathbb{R}_+$ is a deterministic response function;
    \item $h_j : \mathbb{R}_+ \to \mathbb{R}_+$ are strictly increasing time-change functions {satisfying} $h_j(0) = 0$ and $\lim_{t \to \infty} h_j(t) = \infty$, {capturing entity-specific} time dynamics;
    \item $N({h_j(t)})$ is a counting process representing the {number} of common shocks up to time ${h_j(t)}$, and $(\gamma_i)_{i \geq 1}$ are the random magnitudes of these shocks.
\end{itemize}

In this case, the kernels $G^j(t - s, x)$ from our general model, as defined in equation~\eqref{eq:shot.noise.process}, reduce to $G(h_j(t) - s)\, x$, thereby linking each shock to the specific time scale of the corresponding entity.

{Compared to this formulation, our framework is more general in several key ways:}
\begin{itemize}
    \item It allows for state-dependent kernels of the form $G^j(t-s, x)$, enabling nonlinear and heterogeneous accumulation of shock effects depending on {the shock} magnitude $x$;
    \item It theoretically accommodates random compensators $\nu$, {though we focus here on the deterministic case to maintain analytical tractability.}
\end{itemize}
\erem

\paragraph{The Case of a Nonhomogeneous Poisson Process (NHPP)}

We now consider a specific case where the shock arrival times $(\theta_i)_{i \geq 1}$ are driven by a nonhomogeneous Poisson process (NHPP) $N(t)$ with {deterministic intensity function} $\lambda^N(t)$, and the shock magnitudes $(\gamma_i)_{i \geq 1}$ are i.i.d. random variables. This corresponds to the special case of the accumulated shock model where the time-change function is identity, $h(t) = t$.

The shot noise processes are then defined by
$$
K_t^1 = \sum_{i=1}^{N(t)} \gamma_i G^1(t - \theta_i), \quad
K_t^2 = \sum_{i=1}^{N(t)} \gamma_i G^2(t - \theta_i),
$$
where we assume $G^j(u) = 0$ for $u < 0$. Note that in this case, the shot-noise kernels reduce to the form $G^j(t - s, x) = G(h_j(t) - s)\, x$, aligning with the structure discussed in Remark \ref{Rmk:Scherer.Schmid.Schmid}.

The associated jump measure becomes
$$
\mu(\omega, [0,t], C) = \sum_{i=1}^{N(t)} \mathbf{1}_{\{\theta_i(\omega) \leq t\}} \mathbf{1}_{\{\gamma_i(\omega) \in C\}}, \quad C \in \mathcal{B}(\mathbb{R}),
$$
with predictable compensator:
$$
\nu(dt, dx) = \lambda^N(t) f_\gamma(x)\, dt\, dx,
$$
where $f_\gamma$ {denotes} the density of the $\gamma_i$.

In this setting, the predictable compensators become
$$
\Lambda^j_t = \int_0^t \int_{\mathbb{R}_+} \left( 1 - e^{-x G^j(t-s)} \right) \lambda^N(s) f_\gamma(x) \, dx \, ds,
$$
and
$$
\Lambda^{\{1,2\}}_t = \int_0^t \int_{\mathbb{R}_+} \left(1 - e^{-x (G^1(t-s) + G^2(t-s))} \right) \lambda^N(s) f_\gamma(x) \, dx \, ds.
$$

Hence, for $t_1 \leq t_2$, the joint survival probability is given by:

\begin{align*}
    \mathbb{P}(\tau^1 > t_1, \tau^2 > t_2) = \exp \Bigg( & \int_0^{t_1} \int_{\mathbb{R}_+} \left( e^{-x (G^1(t_1-s) + G^2(t_1-s))} -e^{-x G^2(t_1-s)}  \right) \lambda^N(s) f_\gamma(x) \, dx \, ds \\
    & + \int_{0}^{t_2} \int_{\mathbb{R}_+} \left( e^{-x G^2(t_2-s)} - 1 \right) \lambda^N(s) f_\gamma(x) \, dx \, ds \Bigg).
\end{align*}
and for $t_1 > t_2$:
\begin{align*}
    \mathbb{P}(\tau^1 > t_1, \tau^2 > t_2) = \exp \Bigg( & \int_0^{t_2} \int_{\mathbb{R}_+} \left( e^{-x (G^1(t_2-s) + G^2(t_2-s))} -e^{-x G^1(t_2-s)}  \right) \lambda^N(s) f_\gamma(x) \, dx \, ds \\
    & + \int_{0}^{t_1} \int_{\mathbb{R}_+} \left( e^{-x G^1(t_1-s)} - 1 \right) \lambda^N(s) f_\gamma(x) \, dx \, ds \Bigg).
\end{align*}

\bcom
{Shot noise processes with NHPPs} are used in \citet{lee2018dynamic} to {model} the cumulative impact of non-fatal common shocks on individual mortality intensities within a dynamic bivariate framework. This  approach enables a realistic representation of progressive health deterioration and captures dependence between lifetimes beyond the scope of simultaneous death events.
\ecom





\section{Generalization of the Construction for \texorpdfstring{$n$}{n} Components \label{sect:Generalization}} 

We now generalize our construction to the case of $n$ components. 
To maintain consistency and clarity, we explicitly extend the definitions, hypotheses, and results introduced in the bivariate case (Section \ref{SecBiv}).

We consider default times $\tau^1, \ldots, \tau^n$, where each $\tau^i$ is defined as the first hitting time
\[
\tau^i := \inf \{ t \geq 0 : K_t^i \geq \Theta^i \},
\]
with $K^i$ an $\mathbb{F}$-adapted, càdlàg, increasing process with $K^i_0=0$, and $\Theta^i$ a unit exponential random variable, independent of $\mathbb{F}$ and of all $\Theta^j$ for $j \neq i$. The processes $K^1, \dots, K^n$ are allowed to be dependent, thereby inducing dependence among the default times $\tau^1, \dots, \tau^n$.

The $\ff$-conditional marginal survival probability for each component $i$ {is} given by the multiplicative decomposition (see equation \eqref{DMi}):  
$$
\mathbb{P}(\tau^{i} > t_i \mid \mathcal{F}_t) = \eta_t^{i} \exp\left(-\Lambda_t^{i}\right) ,
$$
where $\eta_t^{i}$ is a nonnegative martingale starting at 1, and $\Lambda_t^{i}$ is the $\ff$-predictable reduction of the compensator.

For each subset $\mathcal{J} \subseteq \{1, \dots, n\}$, define the aggregate process by:
$$
K^{\mathcal{J}}_t := \sum_{j \in \mathcal{J}} K^{j}_t.
$$
and define the aggregate default time $\tau^{\mathcal{J}} := \min_{j \in \mathcal{J}} \tau^j$, with the convention $\tau^\emptyset := +\infty$. This default time admits an Azéma supermartingale decomposition of the form
\[
Z_t^{\mathcal{J}} := \mathbb{P}(\tau^{\mathcal{J}} > t \mid \mathcal{F}_t),
\]
which admits the multiplicative decomposition
\[
Z_t^{\mathcal{J}}  = \eta_t^{\mathcal{J}}  \, e^{-\Lambda_t^{\mathcal{J}} },
\]
with $\eta^{\mathcal{J}} $ a nonnegative $\mathbb{F}$-martingale starting at $1$, and $\Lambda^{\mathcal{J}} $ its $\mathbb{F}$-predictable reduction of the compensator.

Finally, we extend the hypotheses (H1) and (H2), originally formulated for the processes $K^c$ and $A^I$, to the aggregate processes $K^{J, c}$ and $A^{I^J}$, for $J \subseteq \{1, \dots, n\}$, as done in Section \ref{sect:Bivariate.Case}. Under these assumptions, the compensators $\Lambda^J$ are deterministic, an essential property for deriving an explicit formula for the joint survival probability $\mathbb{P}(\tau^1 > t_1, \ldots, \tau^n > t_n \mid \mathcal{F}_t)$.

\begin{Theorem}\label{Thm:Multivariate.Survival}
    Let $\sigma$ be a permutation of $\{1, \ldots, n\}$ such that
    
    $$
    t_{\sigma(1)} \leq t_{\sigma(2)} \leq \cdots \leq t_{\sigma(n)}.
    $$
    
    {For $k=1,\ldots, n$ define} the nested subsets
    
    $$
    A_k := \{\sigma(k), \sigma(k+1), \ldots, \sigma(n)\}, \quad \text{and}  \quad A_{n+1} :=\emptyset \quad \text{by convention} 
    $$
    
    {By construction,}
    
    $$
    A_1 \supset A_2 \supset \cdots \supset A_n = \{\sigma(n)\}.
    $$
    
    Then, for all {$\min(t_1, t_2, \dots, t_n) \geq t \geq 0  $}, the joint survival probability conditional on $\mathcal{F}_t$ satisfies
    
    \be \label{FI}
    \mathbb{P}(\tau^1 > t_1, \ldots, \tau^n > t_n \mid \mathcal{F}_t) = \eta_t^{A_1} \exp\left(  - \sum_{k=1}^{n} \left( \Lambda_{t_{\sigma(k)}}^{A_k} - \Lambda_{t_{\sigma(k)}}^{A_{k+1}} \right) \right),
    \ee

\end{Theorem}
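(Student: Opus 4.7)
My plan is to reduce to the ordered case by relabeling via the permutation $\sigma$, so that without loss of generality $t_1 \leq t_2 \leq \cdots \leq t_n$ and $A_k = \{k, k+1, \ldots, n\}$, and then to compute the conditional joint survival probability by iterating the tower property, peeling off one time coordinate at a time from the largest index downward. The starting point is the identity
$$
\mathbb{P}(\tau^1 > t_1, \ldots, \tau^n > t_n \mid \mathcal{F}_t) = \mathbb{E}\bigl[\exp(-\textstyle\sum_{i=1}^n K^i_{t_i}) \mid \mathcal{F}_t\bigr],
$$
which follows because the $\Theta^i$ are unit exponentials independent of $\mathcal{F}_\infty$ and of each other, together with the defining relation $\{\tau^i > t_i\} = \{K^i_{t_i} < \Theta^i\}$.

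Next, for $k$ decreasing from $n$ down to $2$, I would condition successively on $\mathcal{F}_{t_{k-1}}$. Since $K^i_{t_i}$ is $\mathcal{F}_{t_{k-1}}$-measurable for $i < k$, these terms come out of the inner conditional expectation. The key step is to apply the multiplicative decomposition \eqref{Decp_Mul} not to an individual $K^i$ but to the aggregated process $K^{A_k}$, which under hypotheses (H1)–(H2) extended to subsets yields
$$
\mathbb{E}\bigl[e^{-K^{A_k}_{t_k}} \mid \mathcal{F}_{t_{k-1}}\bigr] = \eta^{A_k}_{t_{k-1}} \, e^{-\Lambda^{A_k}_{t_k}},
$$
since $\Lambda^{A_k}$ is deterministic. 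Rewriting $\eta^{A_k}_{t_{k-1}} = e^{\Lambda^{A_k}_{t_{k-1}}} e^{-K^{A_k}_{t_{k-1}}}$ then merges $K^{A_k}_{t_{k-1}}$ with the summand $K^{k-1}_{t_{k-1}}$ to form $K^{A_{k-1}}_{t_{k-1}}$, using the additivity $K^{A_k} + K^{k-1} = K^{A_{k-1}}$ inherited from $A_{k-1} = A_k \cup \{k-1\}$. Each iteration therefore extracts a deterministic factor $\exp(\Lambda^{A_k}_{t_{k-1}} - \Lambda^{A_k}_{t_k})$ and leaves an expectation of exactly the same structural form with one fewer time coordinate.

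After the final iteration, the remaining expression is $\mathbb{E}[e^{-K^{A_1}_{t_1}} \mid \mathcal{F}_t] = \eta^{A_1}_t e^{-\Lambda^{A_1}_{t_1}}$ by one last application of the multiplicative decomposition. Collecting all accumulated factors gives
$$
\eta^{A_1}_t \exp\!\Bigl(-\Lambda^{A_1}_{t_1} + \sum_{k=2}^n \bigl(\Lambda^{A_k}_{t_{k-1}} - \Lambda^{A_k}_{t_k}\bigr)\Bigr),
$$
which, with the convention $A_{n+1} = \emptyset$ and $\Lambda^{A_{n+1}} \equiv 0$, telescopes after regrouping into $\eta^{A_1}_t \exp\bigl(-\sum_{k=1}^n (\Lambda^{A_k}_{t_k} - \Lambda^{A_{k+1}}_{t_k})\bigr)$, as claimed.

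The main difficulty is not conceptual but bookkeeping: one must carefully track which indices are absorbed into the running aggregate at each level of the iterated tower property and verify that the telescoping of the $\Lambda^{A_k}$ factors matches the target expression. Justifying the extraction of $e^{-\Lambda^{A_k}_{t_k}}$ from each conditional expectation relies crucially on the determinism of $\Lambda^{A_k}$ (from (H1)–(H2) extended to subsets), and the argument can be phrased cleanly either as the direct iteration described above or, equivalently, by induction on $n$ with the bivariate statement of Theorem \ref{Thm:Bivariate.Survival} as the base case.
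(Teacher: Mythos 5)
Your proof is correct and follows essentially the same route as the paper: iterating the tower property backward from the largest time, using the deterministic compensators and the martingale property of the $\eta^{A_k}$ coming from the multiplicative decomposition of the aggregate Azéma supermartingale, and exploiting the additivity $K^{A_k} + K^{\sigma(k-1)} = K^{A_{k-1}}$ to merge one component per step. The only difference is bookkeeping: the paper first peels off all individual deterministic factors $e^{-\Lambda^{\sigma(k)}_{t_{\sigma(k)}}}$ and then explicitly ``fuses'' $\eta^{\sigma(k)}\cdot\eta^{A_{k+1}}$ into $\eta^{A_k}$ with a correction term, whereas you keep the $K$'s in the exponent throughout and only pass to $\eta^{A_k}$ locally at each conditioning step, which avoids the final cancellation step but is the same mechanism in a slightly more economical notation.
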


\proof
By independence of $\Theta^i$ and the definition of the default times $\tau^i$, the joint survival probability is expressed as

$$
\mathbb{P}(\tau^1 > t_1, \ldots, \tau^n > t_n \mid \mathcal{F}_t) = \mathbb{E}\left[ \prod_{i=1}^n e^{-K_{t_i}^{i}} \mid \mathcal{F}_t \right].
$$

Reordering the product according to $\sigma$, we write

$$
\mathbb{P}(\tau^1 > t_1, \ldots, \tau^n > t_n \mid \mathcal{F}_t)= \mathbb{E}\left[ \prod_{k=1}^n e^{-K_{t_{\sigma(k)}}^{\sigma(k)}} \mid \mathcal{F}_t \right].
$$

Each term admits the multiplicative decomposition as in equation \eqref{Decp_Mul}

$$
e^{-K_{t_{\sigma(k)}}^{\sigma(k)}} = \eta_{t_{\sigma(k)}}^{\sigma(k)} e^{-\Lambda_{t_{\sigma(k)}}^{\sigma(k)}}.
$$

Extracting the deterministic exponential factor, we obtain

\begin{equation} \label{eq:proof.multiv.original.formula}
    \mathbb{P}(\tau^1 > t_1, \ldots, \tau^n > t_n \mid \mathcal{F}_t)= e^{-\sum_{k=1}^n \Lambda_{t_{\sigma(k)}}^{\sigma(k)}} \mathbb{E}\left[ \prod_{k=1}^n \eta_{t_{\sigma(k)}}^{\sigma(k)} \mid \mathcal{F}_t \right],
\end{equation}

and it remains to evaluate

$$
\mathbb{E}\left[ \prod_{k=1}^n \eta_{t_{\sigma(k)}}^{\sigma(k)} \mid \mathcal{F}_t \right].
$$

Proceeding {recursively backward from the largest time} $t_{\sigma(n)}$, {$\eta^{\sigma(n)}$, being a martingale,} satisfies the property

$$
\mathbb{E}\left[ \eta_{t_{\sigma(n)}}^{\sigma(n)} \mid \mathcal{F}_{t_{\sigma(n-1)}} \right] = \eta_{t_{\sigma(n-1)}}^{\sigma(n)}.
$$

Hence,

$$
\mathbb{E}\left[ \prod_{k=1}^n \eta_{t_{\sigma(k)}}^{\sigma(k)} \mid \mathcal{F}_t \right] = \mathbb{E}\left[ \left( \prod_{k=1}^{n-1} \eta_{t_{\sigma(k)}}^{\sigma(k)} \right) \mathbb{E}\left[ \eta_{t_{\sigma(n)}}^{\sigma(n)} \mid \mathcal{F}_{t_{\sigma(n-1)}} \right] \mid \mathcal{F}_t \right] = \mathbb{E}\left[ \left( \prod_{k=1}^{n-1} \eta_{t_{\sigma(k)}}^{\sigma(k)} \right) \eta_{t_{\sigma(n-1)}}^{\sigma(n)} \mid \mathcal{F}_t \right].
$$

At time $t_{\sigma(n-1)}$, the martingales $\eta_{t_{\sigma(n-1)}}^{\sigma(n-1)}$ and $\eta_{t_{\sigma(n-1)}}^{\sigma(n)}$ fuse according to the martingale associated with the set $A_{n-1} = \{\sigma(n-1), \sigma(n)\}$:

{\begin{align*}
    \eta_{t_{\sigma(n-1)}}^{\{\sigma(n-1)\}} \cdot \eta_{t_{\sigma(n-1)}}^{\{\sigma(n)\}} & = e^{ - K_{t_{\sigma(n-1)}}^{\sigma(n-1)} - K_{t_{\sigma(n-1)}}^{\sigma(n)} + \Lambda_{t_{\sigma(n-1)}}^{\sigma(n-1)} + \Lambda_{t_{\sigma(n-1)}}^{\sigma(n)} } \\
    & = e^{ - K_{t_{\sigma(n-1)}}^{A_{n-1}} + \Lambda_{t_{\sigma(n-1)}}^{\sigma(n-1)} + \Lambda_{t_{\sigma(n-1)}}^{\sigma(n)} } \\
    & = e^{ - K_{t_{\sigma(n-1)}}^{A_{n-1}} + \Lambda_{t_{\sigma(n-1)}}^{A_{n-1}} + \Lambda_{t_{\sigma(n-1)}}^{\sigma(n-1)} + \Lambda_{t_{\sigma(n-1)}}^{\sigma(n)} - \Lambda_{t_{\sigma(n-1)}}^{A_{n-1}} } \\
    & = e^{\Lambda_{t_{\sigma(n-1)}}^{\sigma(n-1)} + \Lambda_{t_{\sigma(n-1)}}^{\sigma(n)} - \Lambda_{t_{\sigma(n-1)}}^{A_{n-1}}} \cdot \eta_{t_{\sigma(n-1)}}^{A_{n-1}}.
\end{align*}}
where in the first and last equality, we used the multiplicative decomposition of the Azéma supermartingale; and in the second equality we used the definition of $A_{n-1}$ and of the aggregate process $K^{A_{n-1}}.$

Conditioning on $\mathcal{F}_{t_{\sigma(n-2)}}$ yields

$$
\mathbb{E}\left[ \eta_{t_{\sigma(n-1)}}^{A_{n-1}} \mid \mathcal{F}_{t_{\sigma(n-2)}} \right] = \eta_{t_{\sigma(n-2)}}^{A_{n-1}}.
$$

We continue this recursive conditioning and combination step for $k = n-2, \ldots, 1$, successively merging the martingales $\eta_{t_{\sigma(k)}}^{\sigma(k)}$ and $\eta_{t_{\sigma(k)}}^{A_{k+1}}$ into the composite martingale $\eta_{t_{\sigma(k)}}^{A_k}$ via the identity:

{\begin{align*}
    \eta_{t_{\sigma(k)}}^{\sigma(k)} \cdot \eta_{t_{\sigma(k)}}^{A_{k+1}} & = e^{ - K_{t_{\sigma(k)}}^{\sigma(k)} - K_{t_{\sigma(k)}}^{A_{k+1}} + \Lambda_{t_{\sigma(k)}}^{\sigma(k)} + \Lambda_{t_{\sigma(k)}}^{A_{k+1}} } \\
    & = e^{ - K_{t_{\sigma(k)}}^{A_{k}} + \Lambda_{t_{\sigma(k)}}^{\sigma(k)} + \Lambda_{t_{\sigma(k)}}^{A_{k+1}} } \\
    & = e^{\Lambda_{t_{\sigma(k)}}^{\sigma(k)} + \Lambda_{t_{\sigma(k)}}^{A_{k+1}} - \Lambda_{t_{\sigma(k)}}^{A_k}} \cdot \eta_{t_{\sigma(k)}}^{A_k},
\end{align*}}

and conditioning on $\mathcal{F}_{t_{\sigma(k-1)}}$ gives

$$
\mathbb{E}\left[ \eta_{t_{\sigma(k)}}^{\sigma(k)} \cdot \eta_{t_{\sigma(k)}}^{A_{k+1}} \mid \mathcal{F}_{t_{\sigma(k-1)}} \right] = e^{\Lambda_{t_{\sigma(k)}}^{\sigma(k)} + \Lambda_{t_{\sigma(k)}}^{A_{k+1}} - \Lambda_{t_{\sigma(k)}}^{A_k}} \cdot \eta_{t_{\sigma(k-1)}}^{A_k}.
$$

By iterating this procedure (noting that $\mathcal{F}_{t_{\sigma(0)}}$ should be interpreted as $\mathcal{F}_{t}$), we obtain the following identity:

$$
\mathbb{E}\left[ \prod_{k=1}^n \eta_{t_{\sigma(k)}}^{\sigma(k)} \mid \mathcal{F}_t \right] = \eta_t^{A_1} \cdot \exp\left( \sum_{k=1}^{n-1} \left( \Lambda_{t_{\sigma(k)}}^{\sigma(k)} + \Lambda_{t_{\sigma(k)}}^{A_{k+1}} - \Lambda_{t_{\sigma(k)}}^{A_k} \right) \right).
$$

Replacing in {equation \eqref{eq:proof.multiv.original.formula}}, it follows that

$$
\mathbb{P}(\tau^1 > t_1, \ldots, \tau^n > t_n \mid \mathcal{F}_t) = e^{-\sum_{k=1}^n \Lambda_{t_{\sigma(k)}}^{\sigma(k)}} \cdot \eta_t^{A_1} \cdot \exp\left( \sum_{k=1}^{n-1} \left( \Lambda_{t_{\sigma(k)}}^{\sigma(k)} + \Lambda_{t_{\sigma(k)}}^{A_{k+1}} - \Lambda_{t_{\sigma(k)}}^{A_k} \right) \right).
$$

After canceling the terms $\Lambda_{t_{\sigma(k)}}^{\sigma(k)}$ for $k = 1, \ldots, n-1$, the expression simplifies to:

$$
\mathbb{P}(\tau^1 > t_1, \ldots, \tau^n > t_n \mid \mathcal{F}_t) = \eta_t^{A_1} \cdot \exp\left( - \Lambda_{t_{\sigma(n)}}^{A_n} - \sum_{k=1}^{n-1} \left( \Lambda_{t_{\sigma(k)}}^{A_k} - \Lambda_{t_{\sigma(k)}}^{A_{k+1}} \right) \right).
$$
\finproof

\begin{Cor} \label{Cor:Multivariate.Joint.Distr}
    Let, $t, t_1, t_2, ..., t_n \in \R^+$ be such that {$\min(t_1, t_2, \dots, t_n) \geq t \geq 0  $}. Then, the  conditional joint survival probability for $n$ components given $\F_t$ is:
    
    \begin{align} \label{JD_mult}
    \P(\tau^1 > t_1, \tau^2 > t_2, \ldots, \tau^n > t_n \mid \mathcal{F}_t) = &
    \exp\Bigg\{
    - \sum_{\substack{\mathcal{J} \subseteq \{1, \ldots, n\} \\ |\mathcal{J}| > 1}}
    \left(\sum_{\mathcal{I} \subseteq \mathcal{J}} (-1)^{|\mathcal{J}| - |\mathcal{I}| + 1} \Lambda_{\max\{t_i, i \in \mathcal{J}\}}^{\mathcal{I}^c}\right)
    \Bigg\}
    \eta_t^{\{1,2,\ldots,n\}}.
    \end{align}
\end{Cor}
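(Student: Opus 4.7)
The plan is to derive Corollary~\ref{Cor:Multivariate.Joint.Distr} from Theorem~\ref{Thm:Multivariate.Survival} by reshaping the permutation-dependent telescoping exponent into a symmetric sum over subsets, using Möbius inversion on the subset lattice $2^{\{1,\dots,n\}}$. This is the higher-dimensional analogue of the bivariate reformulation
$-\Lambda^{2}_{t_2} - (\Lambda^{\{1,2\}}_{t_1}-\Lambda^{2}_{t_1}) = -\Gamma^{1}_{t_1}-\Gamma^{2}_{t_2}-\Gamma^{\{1,2\}}_{t_1 \vee t_2}$
already displayed after Theorem~\ref{Thm:Bivariate.Survival}. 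Since the martingale factor $\eta_t^{A_1}=\eta_t^{\{1,\dots,n\}}$ is already in the desired form, the whole question reduces to rewriting the exponent in~\eqref{FI} as the exponent in~\eqref{JD_mult}.

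The first step is to introduce, for each subset $\mathcal{J} \subseteq \{1,\dots,n\}$, the auxiliary ``interaction'' compensator
\[
\Gamma^{\mathcal{J}}_s := \sum_{\mathcal{I} \subseteq \mathcal{J}} (-1)^{|\mathcal{J}|-|\mathcal{I}|+1}\, \Lambda^{\mathcal{I}^c}_s, \qquad \mathcal{I}^c = \{1,\dots,n\}\setminus \mathcal{I},
\]
with the convention $\Lambda^{\emptyset}\equiv 0$. For $n=2$ a direct check recovers the $\Gamma^{1},\Gamma^{2},\Gamma^{\{1,2\}}$ defined in Section~\ref{sect:Bivariate.Case}. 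The Corollary then amounts to the purely algebraic identity
\[
\sum_{k=1}^{n}\left(\Lambda^{A_k}_{t_{\sigma(k)}} - \Lambda^{A_{k+1}}_{t_{\sigma(k)}}\right)
\;=\; \sum_{\mathcal{J}} \Gamma^{\mathcal{J}}_{\max\{t_i,\, i\in \mathcal{J}\}},
\]
where the sum on the right ranges over the subsets indexing the statement.

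The second step is a coefficient-by-coefficient check. Fix a subset $\mathcal{K} \subseteq \{1,\dots,n\}$ and a rank $j \in \{1,\dots,n\}$, and compute the total coefficient of $\Lambda^{\mathcal{K}}_{t_{\sigma(j)}}$ on each side. On the left, the nested structure $A_1 \supset A_2 \supset \cdots \supset A_n$ implies that $\Lambda^{\mathcal{K}}_{t_{\sigma(j)}}$ has a nonzero coefficient only when $\mathcal{K}=A_k$ for some $k$, and contributes $+1$ at $t_{\sigma(k)}$ together with $-1$ at $t_{\sigma(k-1)}$ (for $k\geq 2$). On the right, setting $\mathcal{I}=\mathcal{K}^c$ and imposing $\max\{t_i,\,i\in\mathcal{J}\}=t_{\sigma(j)}$ forces $\sigma(j)\in \mathcal{J}\subseteq\{\sigma(1),\dots,\sigma(j)\}$, so the coefficient reduces to an alternating sum over those $\mathcal{J}$ that contain $\mathcal{K}^c\cup\{\sigma(j)\}$ and are contained in $\{\sigma(1),\dots,\sigma(j)\}$.

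The main obstacle is the combinatorial collapse of this alternating sum. Using the standard identity $\sum_{i=0}^{m}(-1)^i\binom{m}{i} = \mathbf{1}_{\{m=0\}}$ on the free elements of $\mathcal{J}$ (those lying in $\{\sigma(1),\dots,\sigma(j-1)\} \setminus \mathcal{K}^c$), one must verify that the sum vanishes unless all those free indices are in fact forced, i.e., unless $\{\sigma(1),\dots,\sigma(j-1)\}\subseteq \mathcal{K}^c$, which is precisely the condition that $\mathcal{K}$ coincides with some $A_k$; and that in the non-vanishing case it produces exactly the $\pm 1$ pattern dictated by $j\in\{k-1,k\}$. Once this bookkeeping is carried out, the algebraic identity above holds term by term, and substituting back into~\eqref{FI} yields~\eqref{JD_mult}.
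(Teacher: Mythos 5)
Your proposal is correct and follows the same overall decomposition as the paper: both define the interaction compensators
\[
\Gamma^{\mathcal{J}}_s:=\sum_{\mathcal{I}\subseteq\mathcal{J}}(-1)^{|\mathcal{J}|-|\mathcal{I}|+1}\,\Lambda^{\mathcal{I}^c}_s
\]
and reduce the corollary, starting from Theorem \ref{Thm:Multivariate.Survival}, to the purely algebraic identity
\[
\sum_{k=1}^{n}\Bigl(\Lambda^{A_k}_{t_{\sigma(k)}}-\Lambda^{A_{k+1}}_{t_{\sigma(k)}}\Bigr)
=\sum_{\mathcal{J}\neq\emptyset}\Gamma^{\mathcal{J}}_{\max_{i\in\mathcal{J}}t_i}.
\]
Where you differ is in how this identity is established. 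The paper takes the Möbius-inversion relation $\Lambda^A_t=\sum_{\mathcal{J}\cap A\neq\emptyset}\Gamma^{\mathcal{J}}_t$ as given (citing Lemma 4.1 of Sun, Mendoza-Arriaga, and Linetsky), expands each $\Lambda^{A_k}$ and $\Lambda^{A_{k+1}}$ in terms of the $\Gamma^{\mathcal{J}}$, observes that the difference singles out precisely those $\mathcal{J}$ containing $\sigma(k)$ and disjoint from $A_{k+1}$, and then swaps the two summations so that each nonempty $\mathcal{J}$ contributes exactly once, at time $\max_{i\in\mathcal{J}}t_i$. You go the other way: you fix a target $\Lambda^{\mathcal{K}}_{t_{\sigma(j)}}$, match its coefficient on both sides, and collapse the resulting alternating sum with $\sum_{\ell=0}^m(-1)^{\ell}\binom{m}{\ell}=\mathbf{1}_{\{m=0\}}$, checking that the surviving $\pm 1$ values force $\mathcal{K}\in\{A_j,A_{j+1}\}$ in agreement with the left side. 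Your route is self-contained, effectively re-deriving the Möbius inversion rather than citing it, at the cost of heavier index bookkeeping; the paper's is shorter once the inversion lemma is granted. Both are valid, and the coefficient calculation you sketch does close correctly.

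One caveat you should fix rather than inherit: the range ``$|\mathcal{J}|>1$'' printed in equation \eqref{JD_mult} does not match either your derivation or the paper's own proof, both of which yield a sum over all \emph{nonempty} $\mathcal{J}$. The singleton contributions $\Gamma^{\{j\}}_{t_j}=\Lambda^{\{1,\dots,n\}}_{t_j}-\Lambda^{\{1,\dots,n\}\setminus\{j\}}_{t_j}$ are genuinely present, as is already visible in the bivariate display $\exp\{-\Gamma^1_{t_1}-\Gamma^2_{t_2}-\Gamma^{\{1,2\}}_{t_1\vee t_2}\}\eta^{\{1,2\}}_t$ right after Theorem \ref{Thm:Bivariate.Survival}. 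Replace your phrase ``the subsets indexing the statement'' by the explicit condition $\mathcal{J}\neq\emptyset$ so that your write-up states the correct range rather than the misprinted one.
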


\proof
For every non‐empty subset \(J\subseteq\{1,\dots,n\}\) and each time \(t\), we first define
\begin{equation} \label{eq:proof.Mobius.Gamma}
    \Gamma_t^J
    :=\sum_{\substack{I\subseteq J}}(-1)^{|J|-|I|+1}\,
    \Lambda_t^{I^c},
    \qquad
    I^c=\{1,\dots,n\}\setminus I,
\end{equation}
with the convention \(\Gamma_t^\emptyset=0\).  
This definition arises from the Möbius inversion (see Remark \ref{Rmk:Mobius.Inversion.Argument} for intuition and \citet{rota1964foundations} for a comprehensive treatment) on the lattice of subsets of \(\{1,\dots,n\}\), i.e.\ the inclusion–exclusion formula that decomposes any \(\Lambda^A\) into its  contributions \(\Gamma^J\).  
In particular, it immediately implies the inverse relation
\begin{equation} \label{eq:proof.Mobius.Lambda}
    \Lambda_t^A
    =\sum_{\substack{J\subseteq\{1,\dots,n\}\\J\cap A\neq\emptyset}}
    \Gamma_t^J
    \;=\;
    \sum_{J\subseteq\{1,\dots,n\}}\Gamma_t^J
    \;-\;
    \sum_{J\subseteq A^c}\Gamma_t^J,
\end{equation}
where the last equality is the direct form of Möbius inversion. This result is similar to the one of Lemma 4.1 of \citet{sun2017marshall} and a complete proof is given by the authors.

{To complete the proof, it suffices to check that the exponents in} equations \eqref{FI} and \eqref{JD_mult} coincide. {To this end}, we compute the sum
\[
\sum_{k=1}^n\Bigl(\Lambda_{t_{\sigma(k)}}^{A_k}-\Lambda_{t_{\sigma(k)}}^{A_{k+1}}\Bigr).
\]
By {the inverse relation shown in equation \eqref{eq:proof.Mobius.Lambda}},
\[
\Lambda_{t_{\sigma(k)}}^{A_k}
=\sum_{\substack{J\subseteq\{1,\dots,n\}\\J\cap A_k\neq\emptyset}}\Gamma_{t_{\sigma(k)}}^J,
\]
and similarly for {$\Lambda_{t_{\sigma(k)}}^{A_{k+1}}$}.

Since
\[
A_k=A_{k+1}\cup\{\sigma(k)\},
\]
we have the exact equivalence:
\[
J\cap A_k\neq\emptyset
\;\Longleftrightarrow\;
\bigl(J\cap A_{k+1}\neq\emptyset\bigr)
\;\lor\;
\bigl(J\cap A_{k+1}=\emptyset\text{ and }\sigma(k)\in J\bigr).
\]
Hence
\begin{align*}
    \Lambda_{t_{\sigma(k)}}^{A_k}
    -\Lambda_{t_{\sigma(k)}}^{A_{k+1}} & = \sum_{\substack{J\subseteq\{1,\dots,n\} \\J\cap A_{k+1}=\emptyset}} \Gamma_{t_{\sigma(k)}}^J + \sum_{\substack{J\subseteq\{1,\dots,n\}\\\sigma(k)\in J\\J\cap A_{k+1}=\emptyset}} \Gamma_{t_{\sigma(k)}}^J - \sum_{\substack{J\subseteq\{1,\dots,n\} \\J\cap A_{k+1}=\emptyset}} \Gamma_{t_{\sigma(k)}}^J \\
    & =\sum_{\substack{J\subseteq\{1,\dots,n\}\\\sigma(k)\in J\\J\cap A_{k+1}=\emptyset}}
    \Gamma_{t_{\sigma(k)}}^J.
\end{align*}
Summing over \(k=1,\dots,n\) gives
\[
\sum_{k=1}^n
\Bigl(\Lambda_{t_{\sigma(k)}}^{A_k}-\Lambda_{t_{\sigma(k)}}^{A_{k+1}}\Bigr)
=\sum_{k=1}^n
\sum_{\substack{J\subseteq\{1,\dots,n\}\\\sigma(k)\in J\\J\cap A_{k+1}=\emptyset}}
\Gamma_{t_{\sigma(k)}}^J = \sum_{{\substack{J\subseteq\{1,\dots,n\}\\J \neq \emptyset}}}
\sum_{\substack{k = 1,\dots,n \\\sigma(k)\in J\\J\cap A_{k+1}=\emptyset}} \Gamma_{t_{\sigma(k)}}^J.
\]
{For a fixed nonempty subset $J \subseteq\{1,\dots,n\} $, we consider the indices $k \in \left \{ 1, \dots, n \right \}$ such that $\sigma(k)  \in J$ and $J\cap A_{k+1}=\emptyset$. By construction, the condition $J\cap A_{k+1}=\emptyset$ means that no element of $J$ appears after $\sigma(k)$ in the ordering $\sigma$. In other words, $\sigma(k)$ must be the last element of $J$ with respect to the permutation $\sigma$. This implies that there is exactly one such index $k$, namely the one for which $\sigma(k) = \max_\sigma(J)$, where this notation denotes the largest element of $J$ under the ordering $\sigma$.}

Therefore, the inner sum contains only one term, corresponding to this unique $k$, and the following holds:

\begin{equation*}
    \sum_{\substack{k = 1,\dots,n \\\sigma(k)\in J\\J\cap A_{k+1}=\emptyset}} \Gamma_{t_{\sigma(k)}}^J = \Gamma_{\max_{i \in J} t_i}^J.
\end{equation*}

Thus
\[
\sum_{k=1}^n
\Bigl(\Lambda_{t_{\sigma(k)}}^{A_k}-\Lambda_{t_{\sigma(k)}}^{A_{k+1}}\Bigr)
=\sum_{{\substack{J\subseteq\{1,\dots,n\}\\J \neq \emptyset}}}
\Gamma_{\max_{i\in J}t_i}^J.
\]

Finally, {using equation \eqref{eq:proof.Mobius.Gamma} to replace} each
\(\Gamma_{\max_{i\in J}t_i}^J\) by
\(\sum_{I\subseteq J}(-1)^{|J|-|I|+1}\Lambda_{\max_{i\in J}t_i}^{I^c}\)
yields the purely \(\Lambda\)‐based expression
\[
\sum_{k=1}^n
\Bigl(\Lambda_{t_{\sigma(k)}}^{A_k}-\Lambda_{t_{\sigma(k)}}^{A_{k+1}}\Bigr)
=\sum_{{\substack{J\subseteq\{1,\dots,n\}\\J \neq \emptyset}}}
\sum_{I\subseteq J}(-1)^{|J|-|I|+1}
\Lambda_{\max_{i\in J}t_i}^{I^c}.
\]
\finproof

\brem \label{Rmk:Mobius.Inversion.Argument} \textbf{(The Möbius Inversion Argument)}
    The use of the Möbius inversion in the proof above may appear unclear at first glance, especially for those unfamiliar with this combinatorial tool. Intuitively, the goal of defining the terms $ \Gamma^J $ is to decompose each compensator $ \Lambda^A $ into contributions that are uniquely attributable to different interaction structures among the components.
    
    The formula for $ \Gamma^J $ performs an inclusion--exclusion-type correction: it isolates the portion of the compensator that corresponds purely to the interaction among members of the subset $ J $, removing effects already accounted for in smaller subsets.
    
    This structure ensures that we can reconstruct any aggregate compensator $ \Lambda^A $ by summing all contributions $ \Gamma^J $ such that $ J \cap A \neq \emptyset $, i.e.,
    \[
    \Lambda^A_t = \sum_{\substack{J \subseteq \{1,\dots,n\} \\ J \cap A \neq \emptyset}} \Gamma^J_t,
    \]
    which follows from Möbius inversion on the lattice of subsets of $ \{1,\dots,n\} $.
\erem

\brems 

{In the special case where the compensators $\Lambda^J$ for all nonempty subsets $J\subseteq\{1,\dots,n\}$ grow linearly in time}, i.e., there exist nonnegative constants 
$\{\lambda^J: J\subseteq\{1,\dots,n\},\,J\neq\emptyset\}$ such that
$$
\Lambda_t^J = \lambda^J\,t.
\qquad t\ge0,
$$
{We then define the effective hazard rates $\gamma^J$ via the inclusion–exclusion principle:}
$$
\gamma^J
:=\sum_{I\subseteq J}(-1)^{|J|-|I|+1}\,\lambda^{I^c}, \text{ for } J \neq \emptyset;
\qquad
\gamma^\emptyset=0.
$$
 Under this specification, Corollary \ref{Cor:Multivariate.Joint.Distr} becomes
$$
\mathbb{P}\left(\tau^1>t_1,\dots,\tau^n>t_n\mid\mathcal{F}_t\right)
=\eta_t^{\{1,\dots,n\}}
\exp \left\{-\sum_{\substack{J \subseteq \{1,\dots,n\} \\ J \neq \emptyset}}
\gamma^J\,\max_{i\in J}t_i\right\}.
$$
In particular, the unconditional survival function becomes:
$$
\exp \left\{-\sum_{\substack{J \subseteq \{1,\dots,n\} \\ J \neq \emptyset}}
\gamma^J\,\max_{i\in J}t_i\right\},
$$
{which coincides with the survival function} of a Marshall–Olkin distribution with parameters $\left\{\gamma^J\right\}$.

\erems 

\brem
    \textbf{(Interpretation).}
    Each term $\gamma^J \max_{i \in J} t_i$ in the exponent of the previous special case represents the cumulative hazard of a potential shock affecting all components in the subset $J$. The function $\max_{i \in J} t_i$ captures the fact that the joint survival of components in $J$ requires no such shock to occur up to the latest of their times. The weight $\gamma^J$ quantifies the intensity of such a joint shock,  reflecting how likely the subset $J$ is to fail simultaneously due to a single event.
    
    In this way, the model captures both marginal and joint risks across all subsets of entities. Smaller subsets (e.g., singletons)  reflect idiosyncratic risk, whereas larger subsets capture systemic components, typical of Marshall–Olkin-type models.
\erem

\bcoms
\begin{enumerate}
    \item This linear-compensator case corresponds to the dynamic extension of the classical Marshall–Olkin model, where default intensities vary with time but remain deterministic and linear.
    \item A particularly important case where the compensators grow linearly in time arises when each $K^j$, for $j=1,\dots, n$, is modeled as a Lévy subordinator. In this setting, the Laplace exponent of the subordinators yields linear compensators, and one recovers the subordinator-based construction of the Marshall–Olkin law, as in \citet{sun2017marshall}.
\end{enumerate}
\ecoms

\subsection{Examples}
In this section, we illustrate the multivariate framework through concrete examples. For each case, we compute the compensators which can then be used in conjunction with Corollary \ref{Cor:Multivariate.Joint.Distr} to obtain explicit expressions for the joint survival function of the multivariate default times.

\subsubsection{The Multivariate L\'evy Subordinators}  \label{SubLev}

We consider $n$ Lévy subordinators $L^{i} = \{L^{i}_t, t \geq 0\}$ for $i = 1, 2, \ldots, n$, that are $\mathbb{F}$-adapted, driftless, and whose jumps are synchronized via a common mechanism (e.g., via a shared underlying counting process $Z$), so that they jump at the same times.

As stated in Proposition \ref{Prop:LevySubordinator}, the Laplace transform of $L^{i}$ is given by:
$$ \E\left[ e^{-z L^{i}_t} \right] = e^{-t \psi_{L^{i}}(z)}, $$
where $\psi_{L^{i}}$ is the Laplace exponent (without drift term).

The joint Laplace transform of $L^1, L^2, \ldots, L^n$ is given by:
$$ \E\left[ e^{-z_1 L^1_t - z_2 L^2_t - \cdots - z_n L^n_t} \right] = e^{-t \psi_{\{L^1, \dots, L^n \}}(z_1, z_2, \ldots, z_n)}, $$
where the function $\psi_{\{L^1, \dots, L^n \}}$ is called the joint Laplace exponent and is defined by the Lévy-Khintchine representation:
$$ \psi_{\{L^1, \dots, L^n \}}(z_1, z_2, \ldots, z_n) = \int_{(0,\infty)^n} \left( 1 - e^{-z_1 x_1 - z_2 x_2 - \cdots - z_n x_n} \right) \nu^{\{ 1, \dots, n \}}(dx_1, dx_2, \ldots, dx_n), $$
where $\nu^{\{ 1, \dots, n \}}$ is a Lévy measure defined on $(0, \infty)^n$ satisfying
$$ \int_{(0,\infty)^n} (1 \wedge (x_1 + x_2 + \cdots + x_n)) \nu^{\{ 1, \dots, n \}}z(dx_1, dx_2, \ldots, dx_n) < \infty. $$

For each component $i \in \{1, 2, \ldots, n\}$ and each subset $\mathcal{J} \subseteq \{1, 2, \ldots, n\}$, we compute the following compensators. These follow from Remark \ref{rm.MD.Deterministic} and the definition of the aggregate process \( K^{\mathcal{J}}_t := \sum_{j \in \mathcal{J}} K^j_t \), as in the bivariate case discussed in Section \ref{Subsect:LevySubordinators}:

\begin{itemize}
\item $\Lambda_{t_i}^{\{1,2,\ldots,n\}}$: the compensator for the joint survival probability of all components up to time $t_i$ is given by:
$$
\Lambda_{t_i}^{\{1,2,\ldots,n\}} = t_i \psi_{\{L^1, \dots, L^n \}}(z_1, z_2, \ldots, z_n).
$$

\item $\Lambda_{t_i}^{\mathcal{J}}$ for any $\mathcal{J} \subseteq \{1, 2, \ldots, n\}$: the compensator for the joint survival probability of the components in $\mathcal{J}$ up to time $t_i$ is given by:
$$
\Lambda_{t_i}^{\mathcal{J}} = t_i \psi_{\mathcal{J}} \left( \left( z_j \right)_{j \in \mathcal{J}} \right),
$$
where $\psi_{\mathcal{J}}$ is understood as the joint Laplace exponent of $\left\{ L^j \right\}_{i \in \mathcal{J}} $
\end{itemize}

\subsubsection{Generalization of the Construction in Liu (2020)}
We extend the construction proposed in \citet{liu2020competing} to the multivariate case by defining, for $i=1, 2, \ldots, n$,

\be \label{Form} K^{i}_t := \phi_i(X) L^{i}_{\vartheta \delta_0(t)}, \ee
where $X$ denotes observed covariates, $\phi_i$ are nonnegative functions that encode the effects of covariates, $\vartheta$ is {an} unobserved heterogeneity factor, and $ \delta_0(t)$ is a monotone time-deformation function. 

Similarly to the construction in Section \ref{SubLev}, under the specification $\vartheta=1$, and conditional on $X=x$, the compensators take the form:
$$ \Lambda^{\{1,2,\ldots,n\}}_t = \delta_0(t) \psi_{\{L^1, \dots, L^n \}}(\phi_1(x), \phi_2(x), \ldots, \phi_n(x)), $$
$$ \Lambda^{i}_t = \delta_0(t) \psi_{L^{i}}(\phi_i(x)) \quad \text{for each } i=1, 2, \ldots, n. $$

\subsubsection{The Linear Factor 
Model of Sun, Mendoza-Arriaga, and Linetsky (2017)}
{We now revisit the linear factor model proposed by} \citet{sun2017marshall}. 

For $i=1, 2, \ldots, n$,
$$ K^{i}_t := \sum_{k=1}^m A_{i,k}  L^k_{t}, $$ where  $L^1, ..., L^m${are} $m$ independent one dimensional Lévy subordinators with null drifts and with Laplace exponents $\psi_{L^k}$ and $A$ is an $n\times m$ matrix with positive entries, i.e., $A_{i,k}$ positive, for all $i,k$. 

\noindent{The aggregate process over all components satisfies:} $$K^{\{1,2,...,n\}}_t: = \sum_{i=1}^n K^{i}_t =\sum_{k=1}^m 
 \bigg(\sum_{i=1}^n A_{i,k} \bigg) L^k_{t}= \sum_{k=1}^m 
  B_k^n  L^k_{t}$$  where $B_k^n=\sum_{i=1}^n A_{i,k}$. Note that the form $ B_k^n  L^k$ is similar to the one given in equation \eqref{Form} where {$B^n_k$} takes the role of $\phi_k(X)$, with $\vartheta=1 $ and $ \delta_0(t)=t$. This representation shows how each component loads linearly onto a set of common factors $L^1, \dots, L^m$, weighted by the matrix $A$.
  
From the result in the previous section, we have:
$$ \Lambda^{\{1,2,\ldots,n\}}_t =t\Bar{ \psi}(B_1^n, B_2^n, \ldots, B_m^n), $$ where {$\Bar{\psi}$} is the Laplace exponent of the sum of the $m$ subordinators $L^k$. Due to the independence of the $L^k$, the joint Laplace exponent decomposes additively, and we obtain: $$ \Lambda^{\{1,2,\ldots,n\}}_t =t\sum_{k=1}^m \psi_{L^k}(B_k^n). $$

\section{ Relaxing Assumption (H1): Incorporating a Stochastic Continuous Component  } \label{sect:Relaxing.assumption}

To relax assumption (H1), we now allow the continuous part of each cumulative process $K^j$ to be stochastic, thereby extending the model’s flexibility to incorporate randomness in gradual degradation components. Throughout this section, we still maintain assumption (H2), that is, the jump-related component $A^I$ remains deterministic.

For every entity $j \in \{1,\dots,n\}$, we postulate the decomposition  
$$
K^{j}_{t}=X^{j}_{t}+\widetilde K^{j}_{t},\qquad t\ge 0,
$$
where  
$\left(X^{j}_{t}\right)_{t\ge 0}$ is a {continuous, increasing, $\mathbb F^{X}$-adapted process} satisfying $X^{j}_{0}=0$ and $X^{j}_{\infty}=+\infty$, whereas  
$\left(\widetilde K^{j}_{t}\right)_{t\ge 0}$ is a {càdlàg, increasing, $\mathbb F^{\widetilde K}$-adapted process} with the same boundary conditions.  
The two filtrations $\mathbb F^{X}=\left(\mathcal F^{X}_{t}\right)_{t\ge 0}$ and $\mathbb F^{\widetilde K}=\left(\mathcal F^{\widetilde K}_{t}\right)_{t\ge 0}$ are assumed independent, and the reference filtration becomes
$$
\mathbb F:=\mathbb F^{X}\vee\mathbb F^{\widetilde K}, \qquad  
\mathcal F_{t}= \mathcal F^{X}_{t}\vee \mathcal F^{\widetilde K}_{t}.
$$

The default time of entity~$j$ is defined in the usual Cox form
$$
\tau^{j}:=\inf\left\{t\ge 0:K^{j}_{t}\ge \Theta^{j}\right\},\qquad 
\Theta^{j}\overset{\text{iid}}{\sim}\mathrm{Exp}(1),\ \Theta^{j}\perp\!\!\!\!\perp\ \mathbb F.
$$

For $t\le\min\left(t_{1},\dots,t_{n}\right)$ the $\ff$-conditional joint probability yields
$$
\mathbb P\left(\tau_{1}>t_{1},\dots,\tau_{n}>t_{n}\mid\mathcal F_{t}\right)
=\mathbb E\left[e^{-\sum_{i=1}^{n}X^{j}_{t_{j}}}\,\Bigm|\mathcal F^{X}_{t}\right]\;
 \mathbb E\left[e^{-\sum_{i=1}^{n}\widetilde K^{j}_{t_{j}}}\,\Bigm|\mathcal F^{\widetilde K}_{t}\right],
$$
 where we have used the $\ff$-conditional independence of $X$ and $\widetilde K$.

The interpretation of {this} construction lies in the {observation that the component involving $X^j$} corresponds to classical Cox {model:}
$$
\overline{\tau}^{j}:=\inf\left\{t\ge 0:X^{j}_{t}\ge\overline{\Theta}^{j}\right\},\qquad
\overline{\Theta}^{j}\overset{\text{iid}}{\sim}\mathrm{Exp}(1),
$$
capturing progressive, possibly idiosyncratic degradation.  
{The component involving $\widetilde K^{j}$ reproduces the generalized Cox construction of the previous sections. To ensure that $\widetilde K^{j}$ admits a deterministic compensator, we assume its continuous part is deterministic (or null), thereby allowing for explicit survival probabilities and simultaneous defaults:}
$$
\widetilde{\tau}^{j}:=\inf\left\{t\ge 0:\widetilde K^{j}_{t}\ge\widetilde\Theta^{j}\right\},\qquad
\widetilde\Theta^{j}\overset{\text{iid}}{\sim}\mathrm{Exp}(1).
$$

We further assume that $ \overline{\Theta}^j $ and $ \widetilde\Theta^{j} $ are independent standard exponential random variables, and both are independent of $X$ and $\widetilde K$, respectively. The random time $\tau^{j}$ is therefore driven by two independent mechanisms, providing a flexible representation of both gradual and jump-induced risk dynamics, i.e., \be \label{Thin-Thick}\tau^{j}:=\min\left(\overline{\tau}^{j}, \widetilde{\tau}^{j}\right).\ee 
This formulation results in a minimum of two independent Cox-type times, each driven by a separate source of randomness. It naturally separates gradual and abrupt deterioration.

\medskip
\noindent\textbf{Advantages:}  
\begin{enumerate}
    \item The model distinguishes explicitly between continuous deterioration and abrupt shocks.
    \item Survival probabilities remain analytically tractable thanks to the above factorization.
    \item {Setting  $X^{j}\equiv 0$ recovers the jump-only framework of Sections \ref{sect:Bivariate.Case} and \ref{sect:Generalization}, while setting $\widetilde K^{j}$ yields a purely continuous model. This shows that the present construction strictly generalizes both cases.}
\end{enumerate}

\bcoms
The decomposition in \eqref{Thin-Thick} is reminiscent of the   \emph{thin--thick} decomposition of \citet{aksamit2021thin}, 
where a {continuous time
mechanism} is contrasted with a {jump-driven time mechanism}. The analogy is not exact, for two essential reasons.

\begin{enumerate}
\item  
Thin times are characterized in \citet[Theorem\; 1.4]{aksamit2021thin}
   by a {purely discontinuous} dual optional projection, whereas thick
   times correspond to a {continuous} projection.  In our framework
   $\widetilde K^{j}$ may include a continuous component.
  The hitting time $\widetilde{\tau}^{j}$ therefore admits an $\ff$-dual optional projection $\widetilde{A^j}^{o,\ff}$ that is not purely of jump type. Indeed,
$
\widetilde{A^j}^{o,\ff} = 1 - e^{-\widetilde K^{j}},
$
and since $K^{j}$ may contain a continuous component, so does  $\widetilde{A^j}^{o,\ff}$. Consequently, $\widetilde{\tau}^{j}$ is not thin in the strict sense.

\item The \emph{thin--thick} theorem further requires $\tau_{\mathrm{thin}}\vee\tau_{\mathrm{thick}}=+\infty$.  
In our setting both $\widetilde{\tau}^{j}$ and $\overline{\tau}^{j}$ are {a priori} finite, so $\widetilde{\tau}^{j}\vee\overline{\tau}^{j}<\infty$ can occur with positive probability, which violates the {condition}.
\end{enumerate}




\ecoms

\section*{Acknowledgments} 
We thank Monique Jeanblanc for her careful reading, insightful comments, and valuable suggestions, which have greatly improved this work. All remaining errors are our own.

\printbibliography[heading=bibintoc]
\end{document}